\newtheorem{them}{Theorem}[section]
\newtheorem{coro}{Corollary}[section]
\newtheorem{lem}{Lemma}[section]
\newtheorem{pro}{Proposition}[section]
\begin{document}

\begin{frontmatter}




\title{Normal property, Jamenson property, CHIP and linear regularity for an infinite system of convex sets in Banach spaces\tnoteref{1}}
\tnotetext[1]{This research was supported by the National Natural Science
Foundation of P. R. China (Grant No. 11261067), the Scientific Research Foundation of Yunnan University under grant No. 2011YB29 and by IRTSTYN. }


\author[2]{Zhou Wei\corref{4}}
\ead{wzhou@ynu.edu.cn}
\author[2]{Qinghai He}
\ead{heqh@ynu.edu.cn}
\cortext[4]{ Corresponding author.}
\address[2]{Department of Mathematics, Yunnan University, Kunming 650091, P. R. China}


\begin{abstract}
In this paper, we study different kinds of normal
properties for infinite system of arbitrarily many convex sets in
a Banach space and provide the dual characterization for the normal
property in terms of the extended Jamenson property for
arbitrarily many weak$^*$-closed convex cones in the dual space.
Then, we use the normal property and the extended Jamenson
property to study CHIP, strong CHIP and linear regularity for the infinite
case of arbitrarily many convex sets and establish equivalent relationship
among these properties. In particular, we extend main results in [3] on
normal property, Jamenson property, CHIP and linear regularity for
finite system of convex sets in a Hilbert space to the
infinite case of arbitrarily many convex sets in Banach space
setting.
\end{abstract}

\begin{keyword}
normality\sep  CHIP\sep linear regularity


\MSC{41A65, 52A05, 90C25}
\end{keyword}

\end{frontmatter}

\section{Introduction}
The concepts of normal property, Jamenson property (property
$(G)$), conical hull intersection property (CHIP) and linear
regularity for finitely many closed convex sets are well-known in approximation theory and optimization and have played important roles in various branches of optimization such as convex feasibility problem,
constrained approximation, Fenchel duality, systems of convex
inequalities and error bounds (see [1, 2, 4, 7, 9, 11, 14, 15] and
references therein). The main aim of this paper is to study
normal property, property ($G$), CHIP and linear regularity for an
infinite system of arbitrarily many convex sets in a Banach space,
and establish equivalent interrelationship among
all these notions.

The notions of normal property and property ($G$) could be traced to
Jamenson in the early 1970s. Jamenson [20] introduced the concept of
normal property for two convex cones and used it to study the
closedness for the sum of dual cones. Property $(G)$ (called by
Jamenson) actually is the dual form of the normal property and was
used to establish a duality theory for two closed convex cones.
Afterwards, property $(G)$ has been extended to finite subsets and
played an important role in the study on CHIP and linear regularity
for a collection of finite closed convex sets. Bakan, Deutsch, and Li [3] extended the definition of normal
property for two closed convex cones to a finite collection of
convex sets in a Hilbert space and applied various normal properties
to study CHIP and linear regularity in Hilbert space setting.

The concept of CHIP was first introduced by Chui, Deutsch and Ward
in [10] for the problem of constrained best approximation in a
Hilbert space. They used the CHIP to provide a unifying framework
for the basic results in the subject of optimal constrained
approximation. Subsequently, strong CHIP, a stronger concept than
CHIP, was introduced by Deutsch, Li and Ward [15] and used
in constrained interpolation and constrained best approximation to
characterize a strong relationship for a certain pair of
optimization in Hilbert space setting. Strong CHIP actually turns out
to be a geometric version of the basic constraint qualification in
convex optimization (cf. [12]). Note that Jeyskumar [21] used strong
CHIP to study optimality and strong duality in convex programming and proved that the strong CHIP is equivalent to a complete Lagrange
multiplier characterization of optimality for convex programming
model problems.

In the 1990s, Bauschke and Borwein [5] studied the concept of
linear regularity when finding the projection from a point to an
intersection of finite many closed convex sets. They proved that this
notion played a key role in establishing a linear convergence rate
of iterates generated by the cyclic projection. Afterwards Bauschke,
Borwein and Li [7] showed that bounded linear regularity implies
strong CHIP for the case of finite closed convex sets but the
converse is not true necessarily (see [2, 8]). For the reason in
application of optimization and mathematical programming,
linear regularity has been extensively studied by many
authors. Readers could
refer to [5-9, 19, 22-24, 27-30] for results on necessary and/or
sufficient conditions for linear regularity .

It is an interesting and important topic to study relationship among
these notions aforementioned. Bauschke, Borwein and Li [7] exhibited
the relationship among strong CHIP, bounded linear regularity,
property $(G)$ and error bound. In 2005, Bakan, Deutsch and Li [3]
extended the normal property for two convex cones to a finite
collection of convex sets in a Hilbert space and used various normal
properties to study property $(G)$, CHIP, strong CHIP and linear
regularity. They provided a perspective on the relationship
among these notions. To our best knowledge, both works in [7] and
[3] are on the finite collection of convex sets in an Euclidean or a
Hilbert space while concepts of CHIP, strong CHIP and linear
regularity for the infinite case are extensively studied by many
authors, so it is meaningful and valuable to consider the normal
property and property $(G)$ for an infinite system of arbitrarily
many convex sets in Banach space setting, and then further study the
relationship among all these notions in the infinite case. Motivated by this and inspired by [3], in this paper, we mainly study
variations of normal properties, such as normal property,
weak normal property and uniform normal property, for a
collection of infinitely many convex sets in a Banach space and
establish the equivalence relationship between the normal property
and the extended Jamenson property (see definitions in section 3).
Then, we apply the normal property and  the extended Jamenson
property to study CHIP, strong CHIP and linear regularity for
the infinite system of arbitrarily many convex sets and provide equivalent relationships
among these notions(see
Theorems 5.4 and 5.5) so as to extend main results on these notions given in [3].

The paper is organized as follows. In section 2 we collect
definitions, notations and preliminary results used later in our
analysis. In section 3, we give definitions of various normal
properties for an infinite system of arbitrarily
many convex sets, and list some facts about different normal
properties. Section 4 is devoted to characterizations for the weak
normal property by the main tool of inverse sum and establishing the
equivalence between the normal property and its dual form in terms
of inverse sum and the extended Jamenson property for infinitely
many weak$^*$-closed convex cones in the dual space. Finally, we
draw main attentions to different kinds of CHIP and linear
regularity, and apply the main results established in sections 3 and
4 to characterize linear and bounded linear regularity in
Banach spaces.

\setcounter{equation}{0}
\section{Preliminaries} Let $X$ be a Banach space with the closed unit
balls denoted by $B_X$, and let $X^*$ denote the dual space of $X$.
For $\bar x\in X$ and $\delta>0$, let $B(\bar x, \delta)$ denote the
open ball with center $\bar x$ and radius $\delta$.

For a subset $S$ in $X$, let $\overline S$ be the closure of $S$ in
the norm topology. A subset $K$ in $X$ is said to be a {\it convex
cone}, if $K+K\subset K$ and $tK\subset K$ for all $t\geq 0$. The
{\it conical hull} of $S$, denoted by cone$(S)$, is the intersection
of all convex cones that contains $S$; thus cone$(S)$ is the smallest
convex cone that contains $S$. The {\it polar} of $S$ is the set
\begin{equation}
S^{\circ}:=\{x^*\in X^*: \langle x^*, x\rangle\leq 1\ \ \forall x\in
S\},
\end{equation}
and for a subset $P$ in $X^*$, the {\it polar}
of $P$ is the set
\begin{equation}
P^{\circ}:=\{x\in X: \langle y^*, x\rangle\leq 1\ \ \forall y^*\in
P\}.
\end{equation}

Let $A$ be a convex subset of $X$. The {\it dual cone} (or negative
polar) of $A$ is the set
\begin{equation}
A^{\ominus}:=\{x^*\in X^*: \langle x^*, x\rangle\leq 0\ \ \forall
x\in A\}.
\end{equation}
The {\it normal cone} and {\it tangent cone} to $A$ at $x\in A$ are
the sets $N(A, x):=(A-x)^{\ominus}$ and $T(A, x):=\overline{\rm
cone}(A-x)$, respectively. It is known that $N(A, x)$ and $T(A, x)$
are the dual cones (and polar) of each other; that is
\begin{equation}
N(A, x)^{\circ}=N(A, x)^{\ominus}=T(A, x)\ \ {\rm and}\ \ T(A,
x)^{\circ}=T(A, x)^{\ominus}=N(A, x).
\end{equation}
The {\it recession cone} of $A$ is the set
\begin{equation}
0^+A:=\{x\in X: A+tx\subset A\ \ \forall t\geq 0\}.
\end{equation}
Obviously, $0^+A$ is a convex cone, and $0^+A=A$ if and only if $A$
is a convex cone. It is not hard to verify that
\begin{equation}
0^+A^{\circ}=A^{\ominus}.
\end{equation}
In the case when $A$ is a closed convex set, $0^+A$ is a closed
convex cone and
\begin{equation}
0^+A=\{x\in X: y+\mathbb{R}_+x\subset A\  {\rm for}\ {\rm some}\
y\in A\}.
\end{equation}

The following lemma contains a few facts about polar and polar
operation. Readers are invited to see [13, 18, 25, 26] for details.

\begin{lem}
Let $\{A_i: i\in I\}$ be a collection of convex sets and $A$ be a
convex set in $X$.

(a) $A^{\circ}$ is a weak$^*$-closed convex subset of $X^*$ with $0\in
A^{\circ}$ and $A^{\circ}=\big(\overline A\big)^{\circ}$.

(b) If $A_1\subset A_2$, then $A_1^{\circ}\supset A_2^{\circ}$.

(c) $(\lambda A)^{\circ}=\frac{1}{\lambda}A^{\circ}$ for each
$\lambda>0$.

(d) (Bipolar theorem) $A^{\circ\circ}=\overline{\rm co}(A\cup
\{0\})$. In particular, if $0\in A$, then $A^{\circ\circ}=\overline
A$.

(e) If each $A_i$ is closed and $0\in \bigcap\limits_{i\in I}A_i$, then
\begin{equation}
\Big(\bigcap_{i\in I} A_i\Big)^{\circ}=\overline{\rm co}^{w^*}\Big(\bigcup_{i\in I}A_i^{\circ}\Big).
\end{equation}
\end{lem}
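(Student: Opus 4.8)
The plan is to read off (a)--(c) directly from the definition of the polar and to build (d) and (e) on the Hahn--Banach separation theorem. For (a) I would write $A^{\circ}=\bigcap_{x\in A}\{x^{*}\in X^{*}:\langle x^{*},x\rangle\le1\}$ as an intersection of weak$^{*}$-closed half-spaces, which is therefore weak$^{*}$-closed and convex, and contains $0$ since $\langle 0,x\rangle=0\le1$ for every $x$; the identity $A^{\circ}=(\overline A)^{\circ}$ then follows from the norm-continuity of each $x^{*}\in A^{\circ}$, which lets the inequality $\langle x^{*},\cdot\rangle\le1$ pass from $A$ to $\overline A$ (the reverse containment being trivial). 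Part (b) is immediate from the definition, and (c) is the chain of equivalences $x^{*}\in(\lambda A)^{\circ}\Leftrightarrow\langle\lambda x^{*},a\rangle\le1\ \forall a\in A\Leftrightarrow\lambda x^{*}\in A^{\circ}$.

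For (d), the inclusion $\overline{\rm co}(A\cup\{0\})\subset A^{\circ\circ}$ holds because $A\cup\{0\}\subset A^{\circ\circ}$ while $A^{\circ\circ}$ is closed and convex (being, by the reasoning of (a), an intersection of closed half-spaces of $X$). For the reverse inclusion I would separate: if $x_{0}\notin\overline{\rm co}(A\cup\{0\})$, choose $x^{*}\in X^{*}$ with $s:=\sup_{x\in\overline{\rm co}(A\cup\{0\})}\langle x^{*},x\rangle<\langle x^{*},x_{0}\rangle$; since $0$ lies in that set, $s\ge0$, hence $\langle x^{*},x_{0}\rangle>0$, and dividing $x^{*}$ by any $\alpha\in(s,\langle x^{*},x_{0}\rangle)$ produces an element of $A^{\circ}$ whose pairing with $x_{0}$ exceeds $1$, so $x_{0}\notin A^{\circ\circ}$. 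The ``in particular'' clause is then clear, since $0\in A$ and $A$ convex force $\overline{\rm co}(A\cup\{0\})=\overline A$.

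For (e), put $P:=\overline{\rm co}^{w^{*}}\big(\bigcup_{i\in I}A_{i}^{\circ}\big)$. The inclusion $P\subset\big(\bigcap_{i}A_{i}\big)^{\circ}$ is routine: $A_{i}^{\circ}\subset\big(\bigcap_{j}A_{j}\big)^{\circ}$ by (b), and $\big(\bigcap_{j}A_{j}\big)^{\circ}$ is weak$^{*}$-closed and convex by (a), so it absorbs the weak$^{*}$-closed convex hull. For the opposite inclusion I would compute the polar of $P$: distributing over the union gives $\big(\bigcup_{i}A_{i}^{\circ}\big)^{\circ}=\bigcap_{i}(A_{i}^{\circ})^{\circ}$, which equals $\bigcap_{i}A_{i}$ because each $A_{i}$ is closed, convex and contains $0$, so $(A_{i}^{\circ})^{\circ}=A_{i}$ by (d); and since $0\in\bigcup_{i}A_{i}^{\circ}$, this is also $P^{\circ}$. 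Taking polars once more and applying the weak$^{*}$ form of the bipolar theorem --- the dual analogue of (d), valid because the dual of $(X^{*},w^{*})$ is $X$ --- to the weak$^{*}$-closed convex set $P\ni0$ yields $P=P^{\circ\circ}=\big(\bigcap_{i}A_{i}\big)^{\circ}$. The two points I expect to need care are the positivity of the scaling constant $\alpha$ in (d), which is exactly where $0$ being in the separated set is used, and the use of the weak$^{*}$ (not norm- or weak-) closure of $\bigcup_{i}A_{i}^{\circ}$ in the bipolar step of (e); the rest is bookkeeping with the defining inequality of the polar.
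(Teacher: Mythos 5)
The paper gives no proof of this lemma at all --- it simply refers the reader to [13, 18, 25, 26] --- so there is no argument of the authors' to compare against. Your proposal is a correct and essentially self-contained proof along the standard textbook lines. Parts (a)--(c) are read off the definition exactly as they should be (the half-space representation gives weak$^*$-closedness and convexity, and norm-continuity of functionals gives $A^{\circ}=(\overline A)^{\circ}$). In (d) you correctly isolate the one point that needs care: since $0$ lies in $\overline{\rm co}(A\cup\{0\})$, the separating level $s$ is nonnegative, so the scalar $\alpha\in(s,\langle x^{*},x_0\rangle)$ is strictly positive and $x^{*}/\alpha$ is a legitimate element of $A^{\circ}$ witnessing $x_0\notin A^{\circ\circ}$. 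In (e) the two-step computation $P^{\circ}=\bigl(\bigcup_i A_i^{\circ}\bigr)^{\circ}=\bigcap_i A_i^{\circ\circ}=\bigcap_i A_i$ (using that each $A_i$ is closed, convex and contains $0$) followed by $P=P^{\circ\circ}=\bigl(\bigcap_i A_i\bigr)^{\circ}$ is valid, and you are right that the step $P=P^{\circ\circ}$ is exactly the bipolar theorem for the dual pair $(X,X^{*})$, i.e.\ it requires $P$ to be weak$^*$-closed (not merely norm- or weak-closed) and to contain $0$; both hold by construction since $0\in A_i^{\circ}$ for every $i$. No gaps.
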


The next lemmas are about the closure, closedness and interior of sets
which will be used in our analysis later.

\begin{lem} Let $A_1$ and $A_2$ be subsets of $X$. Then
\begin{equation}
\overline A_1+\overline A_2\subset \overline{\overline A_1+\overline
A_2}=\overline{A_1+A_2}.
\end{equation}
\end{lem}

\begin{lem} Let $A$ be a convex subset of $X$ with ${\rm{int}}(A)\neq\emptyset$. Then
\begin{equation*}
{\rm{int}}(\overline A)={\rm{int}}(A).
\end{equation*}
\end{lem}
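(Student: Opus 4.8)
The plan is to prove the nontrivial inclusion ${\rm int}(\overline A)\subseteq{\rm int}(A)$; the reverse inclusion ${\rm int}(A)\subseteq{\rm int}(\overline A)$ follows immediately from $A\subseteq\overline A$ and monotonicity of the interior operator. The whole argument rests on one elementary convexity fact that I would establish first: if $A$ is convex, $a\in{\rm int}(A)$ and $b\in\overline A$, then every point $(1-\lambda)b+\lambda a$ with $\lambda\in(0,1]$ lies in ${\rm int}(A)$. This is the "segment" property of convex sets, and it is exactly the tool that lets one upgrade a point known to be near $\overline A$ into a point in the genuine interior.

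To prove this auxiliary fact, fix $\lambda\in(0,1)$ (the case $\lambda=1$ is trivial) and choose $\rho>0$ with $B(a,\rho)\subseteq A$. Since $b\in\overline A$, pick $b'\in A$ with $\|b'-b\|<\frac{\lambda\rho}{2(1-\lambda)}$. By convexity the open ball $(1-\lambda)b'+\lambda B(a,\rho)$, which is centred at $(1-\lambda)b'+\lambda a$ and has radius $\lambda\rho$, is contained in $A$; since its centre is within distance $(1-\lambda)\|b'-b\|<\frac{\lambda\rho}{2}$ of $(1-\lambda)b+\lambda a$, the ball of radius $\frac{\lambda\rho}{2}$ around $(1-\lambda)b+\lambda a$ sits inside $A$, whence $(1-\lambda)b+\lambda a\in{\rm int}(A)$.

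Granting this, I would finish as follows. Let $x\in{\rm int}(\overline A)$ and fix $a_0\in{\rm int}(A)$, which exists by hypothesis. If $x=a_0$ we are done, so assume $x\neq a_0$ and pick $\delta>0$ with $B(x,\delta)\subseteq\overline A$. Put $t:=\frac{\delta}{2\|x-a_0\|}$ and $y:=x+t(x-a_0)$; then $\|y-x\|=\frac{\delta}{2}<\delta$, so $y\in\overline A$, while rearranging gives $x=\frac{1}{1+t}\,y+\frac{t}{1+t}\,a_0$, i.e. $x=(1-\lambda)y+\lambda a_0$ with $\lambda:=\frac{t}{1+t}\in(0,1)$. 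Applying the auxiliary fact with $b=y$ and $a=a_0$ yields $x\in{\rm int}(A)$, completing the proof. The only point requiring any care is the approximation step in the auxiliary fact, where the boundary-type point $b$ is replaced by a nearby $b'\in A$ and one must keep the displacement of the translated ball's centre strictly below its radius $\lambda\rho$; everything else is bookkeeping. Note that neither Lemma 2.1 nor Lemma 2.2 is needed here, as the statement is a purely topological–convexity fact.
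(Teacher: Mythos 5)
Your proof is correct and complete: the auxiliary "line segment" fact (that $(1-\lambda)b+\lambda a\in{\rm int}(A)$ whenever $a\in{\rm int}(A)$, $b\in\overline A$, $\lambda\in(0,1]$) is established with the right quantitative care, and the push-out construction $y=x+t(x-a_0)$ correctly exhibits any $x\in{\rm int}(\overline A)$ as a proper convex combination of a point of $\overline A$ and an interior point of $A$. The paper states this lemma without proof (it is a classical fact, referred to its cited convex-analysis sources), and your argument is precisely the standard one, so there is nothing to compare beyond noting that it is valid.
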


In the whole paper, we suppose that $I$ is an arbitrary nonempty
index set. Let $\{C_i: i\in I\}$ be a collection of subsets in $X$
with the nonempty intersection. The set $\sum\limits_{i\in I}C_i$ is
defined by
\begin{equation}
\sum_{i\in I}C_i:=\left\{\sum_{i\in I_0}c_i: c_i\in C_i,
\emptyset\not=I_0\subset I\ {\rm being}\ {\rm finite}\right\}.
\end{equation}

Let $P$ and $Q$ be metric spaces. Recall that a set-valued mapping
$F:P\rightarrow 2^Q$ is lower semicontinuous if, for any $x_0\in P$,
$y_0\in F(x_0)$ and any neighborhood $V$ of $y_0$, there exists a
neighborhood $U$ of $x_0$ such that $V\cap F(x)\not=\emptyset$ for
each $x\in U$. It is clear that $F:P\rightarrow 2^Q$ is lower
semicontinuous if and only if, for each $y\in Q$, the real-valued
function $x\mapsto d(y, F(x))$ is upper semicontinuous.

 When $P$ has finite elements, every set-valued
mapping $F:P\rightarrow 2^Q$ is automatically lower semicontinuous.

\setcounter{equation}{0}
\section{Normal properties} In a Hilbert space, Bakan, Deutsch and Li [3] introduced and studied various kinds
of normal properties such as uniform normal property, normal
property and weak normal property for a collection of finitely many convex sets. In this
section, we consider these normal properties for an infinite system of
arbitrarily many convex sets in a Banach space, and study the interrelationship among these normal properties.\\

\noindent{\bf{Definition 3.1.}} Let $\{A_i: i\in I\}$ be a collection of
convex sets in $X$ with a nonempty intersection.

(i) $\{A_i: i\in I\}$ is said to have the {\it closed intersection property}, if
\begin{equation}\label{3.1}
\overline{\bigcap_{i\in I}A_i}=\bigcap_{i\in I}\overline{A_i}.
\end{equation}

(ii) $\{A_i: i\in I\}$ is said to have the {\it normal property}, if there exists $\eta>0$ such that
\begin{equation}\label{3.2}
\bigcap_{i\in I}(A_i+\eta B_X)\subset\Big(\bigcap_{i\in I}A_i\Big)+B_X.
\end{equation}

(iii) $\{A_i: i\in I\}$ is said to have the {\it weak normal property}, if for every $x^*\in X^*$ there exists $\eta_{x^*}>0$ such that
\begin{equation}\label{3.3}
\bigcap_{i\in I}(A_i+\eta_{x^*} B_X)\subset\overline{\Big(\bigcap_{i\in I}A_i\Big)+\{x^*\}^{\circ}}.
\end{equation}

(iv) $\{A_i: i\in I\}$ is said to have the {\it uniform normal
property}, if there exists $\eta>0$ such that
\begin{equation}\label{3.4}
\bigcap_{i\in I}(A_i+\eta\delta B_X)\subset\Big(\bigcap_{i\in I}A_i\Big)+\delta B_X\ \ \forall \delta>0.
\end{equation}

From the definition, the uniform normal property implies the normal property and $\{A_i: i\in I\}$ has the closed intersection property if each $A_i$ is closed.\\

\noindent{\bf Remark 3.1.} When $I$ is a finite index set, saying $I:=\{1, \cdots, m\}$, the weak normal property in Definition 3.1(iii) is equivalent to that for any $x^*\in X^*$ there is one constant $\eta^{\prime}_{x^*}>0$ such that
\begin{equation}\label{3.5a}
\bigcap_{i=1}^m(A_i+\eta^{\prime}_{x^*} B_X)\subset\Big(\bigcap_{i=1}^mA_i\Big)+\{x^*\}^{\circ}.
\end{equation}
Indeed, we only need to show that \eqref{3.3} implies \eqref{3.5a}. Let $x^*\in X^*$ and suppose that \eqref{3.3} holds with $\eta_{x^*}>0$. Since
$$
A_i+\frac{\eta_{x^*}}{2}B_X\subset A_i+{\rm int}(\eta_{x^*} B_X)\subset {\rm int}(A_i+\eta_{x^*} B_X),
$$
one has
\begin{eqnarray*}
\bigcap_{i=1}^m \Big(A_i+\frac{\eta_{x^*}}{2}B_X\Big)&\subset&\bigcap_{i=1}^m {\rm int}(A_i+\eta_{x^*} B_X)\subset \Big({\rm int}\bigcap_{i=1}^m(A_i+\eta_{x^*} B_X)\Big)\\
&\subset&{\rm int}\left(\Big(\overline{\bigcap_{i=1}^mA_i\Big)+\{x^*\}^{\circ}}\right).
\end{eqnarray*}
Noting that $(x^{*})^{-1}(-\infty, 1)\subset{\rm int}\big(\{x^*\}^{\circ}\big)$ and thus ${\rm int}\big(\{x^*\}^{\circ}\big)\neq\emptyset$, by Lemma 2.3, it follows that
$$
\bigcap_{i=1}^m \Big(A_i+\frac{\eta_{x^*}}{2}B_X\Big)\subset{\rm int}\left(\Big(\overline{\bigcap_{i=1}^mA_i\Big)+\{x^*\}^{\circ}}\right)={\rm int}\left(\Big(\bigcap_{i=1}^mA_i\Big)+\{x^*\}^{\circ}\right).
$$
Hence \eqref{3.5a} holds with $\eta^{\prime}_{x^*}:=\frac{\eta_{x^*}}{2}$.

The following proposition, similar to [3, Theorem 3.1], shows the
interrelationship of various normal properties for an infinite
collection of convex cones. We give its proof for sake of
completeness.

\begin{pro} Let $\{K_i: i\in I\}$ be a collection of convex cones
in $X$.

$\rm (i)$ $\{K_i: i\in I\}$ has the normal property if and only if
it has the uniform normal property.

$\rm (ii)$ If $\{K_i: i\in I\}$ has the normal property, then it has
the closed intersection property and the weak normal property.

$\rm (iii)$ $\{K_i: i\in I\}$ has the normal property (resp. the weak
normal property) if and only if it has the closed intersection
property and $\{\overline K_i: i\in I\}$ has the normal property
(resp. the weak normal property).
\end{pro}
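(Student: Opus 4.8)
The plan is to treat the three parts in turn, relying throughout on two elementary facts: a convex cone $K$ satisfies $\lambda K=K$ for every $\lambda>0$, and $d(x,\overline S)=d(x,S)$ for every $S\subset X$, so that $d(x,S)<\rho$ already forces $x\in S+\rho B_X$ even though nearest points need not exist in a Banach space. For (i), the uniform normal property gives the normal property by taking $\delta=1$; conversely, if $\bigcap_{i\in I}(K_i+\eta B_X)\subset(\bigcap_{i\in I}K_i)+B_X$ and $x\in\bigcap_{i\in I}(K_i+\eta\delta B_X)$, then dividing by $\delta$ and using $\tfrac{1}{\delta}K_i=K_i$ places $\tfrac{x}{\delta}$ in $\bigcap_{i\in I}(K_i+\eta B_X)$, and multiplying the inclusion back by $\delta$ (with $\bigcap_{i\in I}K_i$ a cone) gives $x\in(\bigcap_{i\in I}K_i)+\delta B_X$. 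For (ii) we may, by (i), assume the uniform normal property with constant $\eta>0$. Given $x^*\in X^*$, choose $\delta>0$ with $\delta B_X\subset\{x^*\}^{\circ}$ (take $\delta=1/\|x^*\|$ when $x^*\neq0$, any $\delta$ otherwise); then $\bigcap_{i\in I}(K_i+\eta\delta B_X)\subset(\bigcap_{i\in I}K_i)+\delta B_X\subset(\bigcap_{i\in I}K_i)+\{x^*\}^{\circ}$, which is the weak normal property with $\eta_{x^*}=\eta\delta$. For the closed intersection property only $\bigcap_{i\in I}\overline{K_i}\subset\overline{\bigcap_{i\in I}K_i}$ needs proof: any $x$ in the left-hand side lies in $\bigcap_{i\in I}(K_i+\eta\delta B_X)$ for every $\delta>0$, hence $d(x,\bigcap_{i\in I}K_i)\leq\delta$ for every $\delta>0$ by the uniform normal property, so $x\in\overline{\bigcap_{i\in I}K_i}$.

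For (iii), note first that closures of convex cones are convex cones (via Lemma 2.2), so (i) applies to $\{\overline{K_i}\}$ as well. For the normal property: if $\{K_i\}$ has it, then (ii) gives the closed intersection property, and with $\eta$ the uniform constant any $x\in\bigcap_{i\in I}(\overline{K_i}+\tfrac{\eta}{2}B_X)$ satisfies $d(x,K_i)\leq\tfrac{\eta}{2}$, hence $x\in\bigcap_{i\in I}(K_i+(\tfrac{\eta}{2}+\varepsilon)B_X)$ for all $\varepsilon>0$, so $d(x,\bigcap_{i\in I}K_i)\leq\tfrac{1}{2}$; since $d(x,\bigcap_{i\in I}\overline{K_i})=d(x,\overline{\bigcap_{i\in I}K_i})=d(x,\bigcap_{i\in I}K_i)$ by the closed intersection property, $x\in(\bigcap_{i\in I}\overline{K_i})+B_X$, i.e. $\{\overline{K_i}\}$ has the normal property. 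The converse is symmetric: using the closed intersection property to replace $\bigcap_{i\in I}\overline{K_i}$ by $\overline{\bigcap_{i\in I}K_i}$ and the uniform normal property of $\{\overline{K_i}\}$, the same distance estimate gives the normal property of $\{K_i\}$. For the weak normal property, the converse is immediate: $K_i\subset\overline{K_i}$ and Lemma 2.2 give $\bigcap_{i\in I}(K_i+\eta'_{x^*}B_X)\subset\bigcap_{i\in I}(\overline{K_i}+\eta'_{x^*}B_X)\subset\overline{(\bigcap_{i\in I}\overline{K_i})+\{x^*\}^{\circ}}=\overline{(\bigcap_{i\in I}K_i)+\{x^*\}^{\circ}}$.

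The remaining and most delicate direction is that the weak normal property of $\{K_i\}$ implies the closed intersection property (after which the weak normal property of $\{\overline{K_i}\}$ follows as in the normal case, via $d(x,K_i)\leq\eta'_{x^*}<\eta_{x^*}$ and $\bigcap_{i\in I}K_i\subset\bigcap_{i\in I}\overline{K_i}$). The key step is to show directly that $\bigcap_{x^*\in X^*}\overline{(\bigcap_{i\in I}K_i)+\{x^*\}^{\circ}}=\overline{\bigcap_{i\in I}K_i}$. One inclusion holds since $0\in\{x^*\}^{\circ}$ for every $x^*$; for the other, suppose $x\notin\overline{\bigcap_{i\in I}K_i}$, separate $x$ from this closed convex set by Hahn--Banach, and observe that because $\bigcap_{i\in I}K_i$ is a cone the supremum of the separating functional over it is $0$ or $+\infty$, so finiteness forces a functional $x^*\in(\bigcap_{i\in I}K_i)^{\ominus}$ with $\langle x^*,x\rangle>0$; then $(\bigcap_{i\in I}K_i)+\{tx^*\}^{\circ}$ lies in the closed half-space $\{z:\langle x^*,z\rangle\leq1/t\}$, which excludes $x$ once $t$ is large. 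Granting this, any $x\in\bigcap_{i\in I}\overline{K_i}$ lies in $\bigcap_{i\in I}(K_i+\eta_{x^*}B_X)\subset\overline{(\bigcap_{i\in I}K_i)+\{x^*\}^{\circ}}$ for every $x^*$, hence in $\overline{\bigcap_{i\in I}K_i}$, which is the closed intersection property. This separation argument is the main obstacle; everything else reduces to scaling and routine applications of Lemmas 2.1 and 2.2.
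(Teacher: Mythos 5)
Your proposal is correct and follows essentially the same route as the paper's proof: cone scaling for (i), the inclusion $\tfrac{1}{\|x^*\|}B_X\subset\{x^*\}^{\circ}$ for (ii), and for (iii) the same key separation claim that $\bigcap_{x^*\in X^*}\overline{\big(\bigcap_{i\in I}K_i\big)+\{x^*\}^{\circ}}\subset\overline{\bigcap_{i\in I}K_i}$, exploiting that the supremum of a linear functional over a cone is $0$ or $+\infty$. The only differences are cosmetic (phrasing some inclusions via distance functions, and normalizing the separating functional by scaling $tx^*$ rather than dividing by the separating constant $\beta$).
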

\begin{proof} (i) is immediate from Definition 3.1.

(ii) Since $\{K_i: i\in I\}$ has the normal property, by (i), there
exists $\eta>0$ such that
\begin{equation}\label{3.5}
\bigcap_{i\in I}(K_i+\eta\delta B_X)\subset\Big(\bigcap_{i\in I}K_i\Big)+\delta B_X\ \ \forall \delta>0.
\end{equation}
This implies that for any $\delta>0$, one has
$$
\bigcap_{i\in I}\overline K_i\subset \bigcap_{i\in I}(K_i+\eta\delta
B_X)\subset \Big(\bigcap_{i\in I}K_i\Big)+\delta B_X.
$$
Thus the closed intersection property for $\{K_i: i\in I\}$ holds.

To verify the weak normal property, let $x^*\in X^*$. If
$\|x^*\|\leq 1$, then $B_X\subset \{x^*\}^{\circ}$ and consequently
inclusion \eqref{3.3} for $\{K_i: i\in I\}$ holds with
$\eta_{x^*}:=\eta$ by \eqref{3.5}. If $\|x^*\|>1$, by using
\eqref{3.5}, one has
$$
\bigcap_{i\in I}\Big(K_i+\frac{\eta}{\|x^*\|}
B_X\Big)\subset\Big(\bigcap_{i\in I}K_i\Big)+\frac{1}{\|x^*\|}B_X.
$$
Noting that $\frac{1}{\|x^*\|}B_X\subset \{x^*\}^{\circ}$, it
follows that inclusion \eqref{3.3} for $\{K_i: i\in I\}$ holds with
$\eta_{x^*}:=\eta/\|x^*\|$. Hence $\{K_i: i\in I\}$ has the weak
normal property.

(iii) Suppose that $\{K_i: i\in I\}$ has the normal property. Then
the closed intersection property holds by (ii). Using \eqref{3.2},
there exists $\eta>0$ such that
$$
\bigcap_{i\in I}(\overline K_i+\frac{\eta}{2}
B_X)\subset\bigcap_{i\in I}(K_i+\eta B_X)\subset\Big(\bigcap_{i\in
I}K_i\Big)+B_X\subset\Big(\bigcap_{i\in I}\overline K_i\Big)+B_X.
$$
This means that $\{\overline K_i: i\in I\}$ has the normal property.

Conversely, there exists $\eta>0$ such that \eqref{3.2} holds for
$\{\overline K_i: i\in I\}$. From this, we can derive that
$$
\bigcap_{i\in I}(K_i+\eta B_X)\subset\bigcap_{i\in I}(\overline
K_i+\eta B_X)\subset\Big(\bigcap_{i\in I}\overline K_i\Big)+B_X.
$$
By virtue of the closed intersection property and Lemma 2.2, one has
$$
\bigcap_{i\in I}(K_i+\eta B_X)\subset\overline{\bigcap_{i\in
I}K_i}+B_X\subset\overline{\Big(\bigcap_{i\in I}
K_i\Big)+B_X}\subset\Big(\bigcap_{i\in I} K_i\Big)+2B_X.
$$
By multiplying both sides by $1/2$, we have that $\{K_i: i\in I\}$
has the normal property with $\eta/2>0$ since each $K_i$ is a cone.

It remains to prove (iii) concerning the weak normal property.
Suppose that $\{K_i: i\in I\}$ has the weak normal property. Then
for each $x^*\in X^*$ there exists $\eta_{x^*}>0$ such that
$$
\bigcap_{i\in I}(\overline K_i+\frac{\eta_{x^*}}{2} B_X)\subset
\bigcap_{i\in I}(K_i+\eta_{x^*}B_X)\subset \overline{\Big(\bigcap_{i\in I}
K_i\Big)+\{x^*\}^{\circ}}\subset\overline{\Big(\bigcap_{i\in I} \overline
K_i\Big)+\{x^*\}^{\circ}}.
$$
Thus $\{\overline K_i: i\in I\}$ has the weak normal property with
$\eta_{x^*}/2>0$ for each $x^*\in X^*$.

Now, we prove the closed intersection property. We claim that
\begin{equation}\label{3.6}
\bigcap_{x^*\in X^*}\Big(\overline{\big(\bigcap_{i\in
I}K_i\big)+\{x^*\}^{\circ}}\Big)\subset \overline{\bigcap_{i\in
I}K_i}.
\end{equation}
Granting this, the closed intersection property follows from the
following inclusions
$$
\bigcap_{i\in I}\overline K_i\subset\bigcap_{x^*\in
X^*}\Big(\bigcap_{i\in I}(K_i+\eta_{x^*}B_X
)\Big)\subset\bigcap_{x^*\in X^*}\Big(\overline{\big(\bigcap_{i\in
I}K_i\big)+\{x^*\}^{\circ}}\Big).
$$

Suppose to the contrary that there exists $x_0\in X$ such that
\begin{equation}\label{3.7}
x_0\in\bigcap_{x^*\in X^*}\Big(\overline{\big(\bigcap_{i\in
I}K_i\big)+\{x^*\}^{\circ}}\Big)\Big\backslash
\Big(\overline{\bigcap_{i\in I}K_i}\Big).
\end{equation}
By the seperation theorem, there exist $\tilde{x}^*_0\in X^*$ with
$\|\tilde{x}^*_0\|=1$ and $\beta\in \mathbb{R}$ such that
\begin{equation}\label{3.8}
\langle \tilde{x}^*_0, x_0\rangle>\beta>\sup\Big\{\langle
\tilde{x}^*_0, z\rangle: z\in \overline{\bigcap_{i\in I}K_i}\Big\}.
\end{equation}
From $0\in \overline{\bigcap\limits_{i\in I}K_i}$, one has $\beta>0$ and
\eqref{3.8} can be rewritten as
\begin{equation}\label{3.9}
\langle x^*_0, x_0\rangle>1>\sup\Big\{\langle x^*_0, z\rangle: z\in
\overline{\bigcap_{i\in I}K_i}\Big\}
\end{equation}
where $x^*_0:=\tilde{x}^*_0/\beta$. Noting that $\bigcap_{i\in
I}K_i$ is a cone, it follows from \eqref{3.9} that
$$
\langle x^*_0, z\rangle\leq 0\ \ \forall z\in\bigcap_{i\in I}K_i.
$$
This and \eqref{3.9} imply that
$$
\langle x^*_0, x_0-z\rangle> 1\ \ \forall z\in\bigcap_{i\in I}K_i.
$$
Thus $x_0\not\in \overline{\big(\bigcap\limits_{i\in
I}K_i\big)+\{x_0^*\}^{\circ}}$, which contradicts \eqref{3.7}. Hence
\eqref{3.6} holds.

Next, we suppose that $\{\overline K_i: i\in I\}$ has the weak
normal property and the closed intersection property. Then for each
$x^*\in X^*$, there exists $\eta_{x^*}>0$ such that
$$
\bigcap_{i\in I}(\overline K_i+\eta_{x^*}B_X)\subset
\overline{\Big(\bigcap_{i\in I}\overline K_i\Big)+\{x^*\}^{\circ}}.
$$
This implies that
$$
\bigcap_{i\in I}(K_i+\eta_{x^*}B_X)\subset \overline{\Big(\bigcap_{i\in
I}\overline K_i\Big)+\{x^*\}^{\circ}}=\overline{\Big(\overline{\bigcap_{i\in
I}K_i}\Big)+\{x^*\}^{\circ}} = \overline{\Big(\bigcap_{i\in
I}K_i\Big)+\{x^*\}^{\circ}}
$$
where the first equality is by the closed intersection and the second equality
follows from Lemma 2.2. Hence $\{K_i: i\in I\}$ has the
weak normal property. The proof is completed.
\end{proof}

\setcounter{equation}{0}
\section{Dual normality and Jamenson property (property ($G$))}
In this section, we study a dual form of the normal property, and
provide a quantitative relationship between the normal property and
the dual normal property. Then, by the notion of extended Jamenson
property for a collection of arbitrarily many weak$^*$-closed
convex cones in $X^*$ introduced in [30], we establish the
equivalence between the normal property and its dual normality form
for the infinite system of arbitrarily many convex
cones.

The main tool used to study the dual normal property and the
extended Jamenson property is the {\it inverse sum} of two convex
sets. We first recall the definition of
inverse sum of two convex sets.\\

\noindent{\bf Definition 4.1.} Let $A_1$ and $A_2$ be two convex sets of $X$ with
$0\in A_1\cap A_2$. The {\it inverse sum} of $A_1$ and $A_2$ is
defined by
\begin{equation}\label{4.1}
A_1\#A_2:=\Big(\bigcup_{0<t<1}(tA_1\cap
(1-t)A_2)\Big)\cup(A_1\cap 0^+A_2)\cup(A_2\cap 0^+A_1).
\end{equation}

The following proposition gives some properties about inverse sum of
convex sets which will be used later in our analysis. Readers are
invited to see [3, Lemma 4.1] for more details and its proof.

\begin{pro} Let $A_1$ and $A_2$ be convex sets of $X$ such that
$0\in A_1\cap A_2$. Then

(a) $(A_1+A_2)^{\circ}=A_1^{\circ}\#A_2^{\circ}$.

(b) If $A_1$ is cone, then $A_1\#A_2=A_1\cap A_2$.

(c) $A_1\#A_2$ is a convex set. Furthermore, if $A_1$ and $A_2$ are closed,
then $A_1\#A_2$ is closed.
\end{pro}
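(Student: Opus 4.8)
The plan is to prove the three parts separately: part (a) by a support‑function computation, part (b) by an elementary set manipulation, and part (c) by homogenizing to convex cones in $X\times\mathbb{R}^2$.

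For (a), I would write $\sigma_A(x^*):=\sup_{a\in A}\langle x^*,a\rangle\in[0,+\infty]$ for the support function of a set $A$, which is nonnegative because $0\in A$. The computation rests on three elementary observations: for $t>0$ one has $x^*\in tA^{\circ}\iff\sigma_A(x^*)\le t$; using the identity $0^+A^{\circ}=A^{\ominus}$ one has $x^*\in 0^+A^{\circ}\iff\sigma_A(x^*)=0$; and $\sigma_{A_1+A_2}=\sigma_{A_1}+\sigma_{A_2}$, so $x^*\in(A_1+A_2)^{\circ}\iff\sigma_{A_1}(x^*)+\sigma_{A_2}(x^*)\le 1$. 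With this dictionary, the inclusion $A_1^{\circ}\#A_2^{\circ}\subset(A_1+A_2)^{\circ}$ is checked piece by piece on the three components of the inverse sum. For the reverse inclusion, given $x^*$ with $\sigma_{A_1}(x^*)+\sigma_{A_2}(x^*)\le1$: if $\sigma_{A_1}(x^*)=0$ then $x^*\in A_2^{\circ}\cap 0^+A_1^{\circ}$; if $\sigma_{A_2}(x^*)=0$ then $x^*\in A_1^{\circ}\cap 0^+A_2^{\circ}$; and if both are positive then $\sigma_{A_1}(x^*)\in(0,1)$, so with $t:=\sigma_{A_1}(x^*)$ one gets $x^*\in tA_1^{\circ}\cap(1-t)A_2^{\circ}$. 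In each case $x^*\in A_1^{\circ}\#A_2^{\circ}$.

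For (b), note that a convex cone $A_1$ with $0\in A_1$ satisfies $tA_1=A_1$ for all $t>0$ and $0^+A_1=A_1$, whereas $sA_2\subset A_2$ and $0^+A_2\subset A_2$ for $0<s<1$ since $A_2$ is convex and contains $0$. Consequently each of the three pieces in the definition of $A_1\#A_2$ lies in $A_1\cap A_2$, while the last piece $A_2\cap 0^+A_1$ already equals $A_1\cap A_2$; hence $A_1\#A_2=A_1\cap A_2$. For the first part of (c), to a convex set $A$ with $0\in A$ I would associate its homogenization
\[
\widehat A:=\{(x,s)\in X\times\mathbb{R}:s>0,\ x\in sA\}\cup\{(x,0):x\in 0^+A\}.
\]
A direct check — splitting according to whether the second coordinates of two summands are positive or zero, and using $A+0^+A\subset A$ — shows $\widehat A$ is a convex cone (closedness of $A$ is not needed here). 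One then verifies the identity
\[
A_1\#A_2=\{x\in X:(x,t)\in\widehat A_1\ \text{and}\ (x,1-t)\in\widehat A_2\ \text{for some}\ t\in\mathbb{R}\},
\]
matching $t\in(0,1)$ with the main component of the inverse sum and $t\in\{0,1\}$ with its two recession components (membership in $\widehat A_1$ and $\widehat A_2$ automatically forces $t\in[0,1]$). Writing the right‑hand side as $\pi_X(E_1\cap E_2\cap E_3)$ in $X\times\mathbb{R}^2$, where $E_1:=\{(x,u,v):(x,u)\in\widehat A_1\}$ and $E_2:=\{(x,u,v):(x,v)\in\widehat A_2\}$ are convex (preimages of $\widehat A_1,\widehat A_2$ under continuous linear maps) and $E_3:=\{(x,u,v):u+v=1\}$ is an affine hyperplane, the convexity of $A_1\#A_2$ follows at once, since a linear image of a convex set is convex.

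For the closedness claim in (c), first note that $\widehat A$ is closed when $A$ is closed; this is a short argument from the definition of the recession cone (the only delicate point being membership along the slice $s=0$, which uses the standard description of the recession cone of a closed convex set). Then $E_1\cap E_2\cap E_3$ is closed, and on it $u,v\ge0$ and $u+v=1$, so it lies in $X\times\Delta$ with $\Delta:=\{(u,v):u,v\ge0,\ u+v=1\}$ compact; since the projection $X\times\Delta\to X$ along a compact factor is a closed map, $A_1\#A_2=\pi_X(E_1\cap E_2\cap E_3)$ is closed. I expect this to be the main obstacle of the whole proposition: the naive argument (continuous image of a closed set) is invalid because projections do not preserve closedness in general, and the point that rescues it is that homogenizing and then intersecting with the hyperplane $u+v=1$ confines the auxiliary coordinates to a compact set, making the relevant projection a closed map.
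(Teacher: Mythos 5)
Your argument is correct in all three parts. Note, however, that the paper itself does not prove Proposition 4.1 at all: it simply refers the reader to [3, Lemma 4.1] (Bakan--Deutsch--Li), where the result is established for convex sets in a Hilbert space. So there is no in-paper proof to compare against; what you have produced is a self-contained verification that, usefully, works in an arbitrary Banach space and does not need closedness for the convexity statement. Your three devices are all sound: the support-function dictionary ($x^*\in tA^{\circ}\Leftrightarrow\sigma_A(x^*)\le t$ for $t>0$, $x^*\in 0^+A^{\circ}=A^{\ominus}\Leftrightarrow\sigma_A(x^*)=0$, and additivity of $\sigma$ over Minkowski sums) turns (a) into a one-line case analysis on whether $\sigma_{A_1}(x^*)$ or $\sigma_{A_2}(x^*)$ vanishes; (b) is elementary once one notes $0^+A_1=A_1$ for a convex cone and $sA_2\cup 0^+A_2\subset A_2$ when $0\in A_2$; and the homogenization $\widehat A$ in (c) is exactly the standard cone-lifting that makes the inverse sum the slice $\{u+v=1\}$ of $\widehat A_1\times\widehat A_2$ pulled back to the diagonal in $X$. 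You are also right to flag the closedness claim as the only genuinely delicate point: the two places where care is needed are (i) that $s_na_n\to x$ with $a_n\in A$, $s_n\downarrow 0$ forces $x\in 0^+A$ for closed convex $A$ (your convexity-plus-closedness argument, or formula (2.7) of the paper, handles this in any normed space), and (ii) that the final projection is along the compact factor $\Delta=\{u,v\ge 0,\ u+v=1\}$, so it is a closed map even though projections are not closed in general. Both points are correctly addressed, so I see no gap.
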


Now, we use the inverse sum to state the following theorem on
the dual form of the weak normal property.

\begin{them}
Let $\{A_i: i\in I\}$ be a collection of convex sets in $X$ such
that $0\in \bigcap_{i\in I}A_i$. Suppose that $\{A_i: i\in I\}$ has
the closed intersection property. Then $\{A_i: i\in I\}$ has the
weak normal property if and only if for each $x^*\in X^*$ there
exists $\tilde{\eta}_{x^*}>0$ such that
\begin{equation}\label{4.2}
[0, x^*]\#\overline{\rm co}^{w^*}\Big(\bigcup\limits_{i\in I}A_i^{\circ}\Big)\subset \overline{\rm
co}^{w^*}\Big(\bigcup_{i\in
I}\big(A_i^{\circ}\#(\tilde{\eta}_{x^*}B_{X^*})\big)\Big)
\end{equation}
where $[0, x^*]:=\{tx^*: t\in [0, 1]\}$.
\end{them}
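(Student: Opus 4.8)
The plan is to dualize the defining inclusion \eqref{3.3} by taking polars, turning sums into inverse sums via Proposition 4.1(a). Fix $x^*\in X^*$, and pair any constant $\tilde\eta_{x^*}>0$ with $\eta:=1/\tilde\eta_{x^*}$; all polar identities below are licit since $0$ belongs to every set involved. By the bipolar theorem applied in $X^*$, $\{x^*\}^{\circ\circ}=[0,x^*]$, and $(\eta B_X)^{\circ}=\tilde\eta_{x^*}B_{X^*}$ by Lemma 2.1(c) and $B_X^{\circ}=B_{X^*}$. Since $A_i^{\circ}=(\overline A_i)^{\circ}$, Proposition 4.1(a) gives $A_i^{\circ}\#(\tilde\eta_{x^*}B_{X^*})=(\overline A_i+\eta B_X)^{\circ}=(A_i+\eta B_X)^{\circ}$, so, polarizing back into $X$ and using the bipolar theorem together with Lemma 2.2, $\big(A_i^{\circ}\#(\tilde\eta_{x^*}B_{X^*})\big)^{\circ}=\overline{A_i+\eta B_X}$. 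As $0$ lies in each $A_i^{\circ}\#(\tilde\eta_{x^*}B_{X^*})$, the polar of the weak$^*$-closed convex hull of their union is the intersection of their polars, so the right-hand side $R$ of \eqref{4.2} satisfies $R^{\circ}=\bigcap_{i\in I}\overline{A_i+\eta B_X}$. On the other hand, the closed intersection property gives $\overline{\bigcap_{i\in I}A_i}=\bigcap_{i\in I}\overline A_i$, whence $\overline{\rm co}^{w^*}\big(\bigcup_{i\in I}A_i^{\circ}\big)=\big(\bigcap_{i\in I}A_i\big)^{\circ}$ by Lemma 2.1(a),(e); applying Proposition 4.1(a) to $\{x^*\}^{\circ}$ and $\overline{\bigcap_{i\in I}A_i}$ together with Lemma 2.2, the left-hand side $L$ of \eqref{4.2} is
\[
L=[0,x^*]\#\Big(\bigcap_{i\in I}A_i\Big)^{\circ}=\Big(\{x^*\}^{\circ}+\bigcap_{i\in I}A_i\Big)^{\circ},\qquad L^{\circ}=\overline{\Big(\bigcap_{i\in I}A_i\Big)+\{x^*\}^{\circ}}.
\]

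Since $L$ and $R$ are polars of subsets of $X$, hence weak$^*$-closed convex sets containing $0$, the inclusion $L\subset R$ of \eqref{4.2} is equivalent by bipolarity to $R^{\circ}\subset L^{\circ}$, that is, to
\[
\bigcap_{i\in I}\overline{A_i+\eta B_X}\subset\overline{\Big(\bigcap_{i\in I}A_i\Big)+\{x^*\}^{\circ}},\qquad \eta=1/\tilde\eta_{x^*}.
\]
It then remains to reconcile this last inclusion with the weak normal property \eqref{3.3}. If it holds for some $\eta>0$, then \eqref{3.3} holds with $\eta_{x^*}=\eta$, because $\bigcap_{i\in I}(A_i+\eta B_X)\subset\bigcap_{i\in I}\overline{A_i+\eta B_X}$. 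Conversely, if \eqref{3.3} holds with constant $\eta_{x^*}$, choose any $\eta\in(0,\eta_{x^*})$ and use the elementary inclusion $\overline{A_i+\eta B_X}\subset A_i+\eta_{x^*}B_X$ (if $a_n+\eta b_n\to y$ with $a_n\in A_i$ and $\|b_n\|\le1$, then $\|y-a_n\|\le\|y-a_n-\eta b_n\|+\eta<\eta_{x^*}$ for large $n$); then $\bigcap_{i\in I}\overline{A_i+\eta B_X}\subset\bigcap_{i\in I}(A_i+\eta_{x^*}B_X)\subset\overline{(\bigcap_{i\in I}A_i)+\{x^*\}^{\circ}}$, so \eqref{4.2} holds with $\tilde\eta_{x^*}=1/\eta$. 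Chaining the two equivalences yields the theorem.

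The step I expect to be the main obstacle is exactly this bookkeeping of closures: \eqref{4.2} does not dualize to \eqref{3.3} verbatim but to the formally stronger inclusion with $\overline{A_i+\eta B_X}$ replacing $A_i+\eta B_X$, the closed intersection hypothesis being precisely what turns $\big(\bigcap_{i\in I}A_i\big)^{\circ}$ into $\overline{\rm co}^{w^*}\big(\bigcup_{i\in I}A_i^{\circ}\big)$. One must also avoid assuming $A_i+\eta B_X$ is norm-closed, routing every computation through Lemma 2.2, and then absorb the discrepancy between the two inclusions by shrinking $\eta_{x^*}$.
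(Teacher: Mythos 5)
Your proposal is correct and follows essentially the same route as the paper: dualize \eqref{3.3} by taking polars, convert sums to inverse sums via Proposition 4.1(a), use Lemma 2.1(e) together with the closed intersection property to identify $\big(\bigcap_{i\in I}A_i\big)^{\circ}$ with $\overline{\rm co}^{w^*}\big(\bigcup_{i\in I}A_i^{\circ}\big)$, and absorb the closure discrepancy between $A_i+\eta B_X$ and $\overline{A_i+\eta B_X}$ by shrinking the constant. Your packaging as a two-step chain of equivalences (with the explicit observation that both sides of \eqref{4.2} are bipolar-stable) is a slightly cleaner organization of the same argument, and the closure bookkeeping you flag as the main obstacle is handled correctly.
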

\begin{proof}
{\bf The necessity part}. Since $\{A_i: i\in I\}$ has the weak normal
property, then for each $x^*\in X^*$ there exists
$\eta_{x^*}>0$ such that
\begin{equation}\label{4.3}
\bigcap_{i\in I}(A_i+\eta_{x^*} B_X)\subset\overline{\bigcap_{i\in
I}A_i+\{x^*\}^{\circ}}.
\end{equation}
Let $\eta:=\eta_{x^*}/2$. By \eqref{4.3}, one has
\begin{equation}\label{4.4}
\bigcap_{i\in I}\overline{(A_i+\eta B_X)}\subset\overline{\bigcap_{i\in
I}A_i+\{x^*\}^{\circ}}.
\end{equation}
Applying the polar operation to both side of \eqref{4.4}, by Lemma
2.1(a) and (e), we obtain
\begin{equation}\label{4.5}
\Big(\overline{\bigcap_{i\in
I}A_i+\{x^*\}^{\circ}}\Big)^{\circ}\subset\Big(\bigcap_{i\in I}\overline{(A_i+\eta B_X)}\Big)^{\circ}=\overline{\rm co}^{w^*}\Big(\bigcup_{i\in I}(A_i+\eta B_X)^{\circ}\Big)
\end{equation}
Noting that $\{A_i: i\in I\}$ has the closed intersection property,
it follows from Lemma 2.1(a) and (e) that
\begin{equation}\label{4.6}
\Big(\bigcap_{i\in I}A_i\Big)^{\circ}=\Big(\overline{\bigcap_{i\in I}A_i}\Big)^{\circ}=\Big(\bigcap_{i\in I}\overline A_i\Big)^{\circ}=\overline{\rm co}^{w^*}\Big(\bigcup_{i\in I}A_i^{\circ}\Big)
\end{equation}
From this, using Lemma 2.1(d) and Proposition 4.1(a), we get
\begin{eqnarray}\label{4.7}
\Big(\bigcap\limits_{i\in
I}A_i+\{x^*\}^{\circ}\Big)^{\circ}
=\{x^*\}^{\circ\circ}\#\Big(\bigcap\limits_{i\in I}A_i\Big)^{\circ}=[0, x^*]\#\overline{\rm co}^{w^*}\Big(\bigcup\limits_{i\in I}A_i^{\circ}\Big).
\end{eqnarray}
Noting that $(A_i+\eta B_X)^{\circ}=A_i^{\circ}\#\frac{1}{\eta}B_{X^*}$ (by Proposition 4.1(a) and Lemma 2.1(c)), it follows from Lemma 2.1(a), (c), \eqref{4.5} and \eqref{4.7} that
$$
[0, x^*]\#\overline{\rm co}^{w^*}\Big(\bigcup\limits_{i\in I}A_i^{\circ}\Big)\subset \overline{\rm co}^{w^*}\Big(\bigcup_{i\in I}(A_i^{\circ}\#\frac{1}{\eta}B_{X^*})\Big).
$$
Hence \eqref{4.2} holds for $\tilde{\eta}_{x^*}:=1/\eta=2/\eta_{x^*}>0$.

{\bf The sufficiency part}. Let $x^*\in X^*$. Then there exists $\tilde{\eta}_{x^*}>0$ such that \eqref{4.2} holds. Let $\eta:=1/\tilde{\eta}_{x^*}$. Note that
\begin{eqnarray*}
\bigcap_{i\in I}(A_i+\eta B_X)\subset \Big(\bigcap_{i\in I}(\overline{A_i+\eta B_X})\Big)^{\circ\circ}=\left(\overline{\rm co}^{w^*}\Big(\bigcup_{i\in I}(A_i^{\circ}\#\frac{1}{\eta}B_{X^*})\Big)\right)^{\circ}
\end{eqnarray*}
where the inclusion is by Lemma 2.1(d) and the equality follows from Lemma 2.1(e) and Proposition 4.1(a). Taking the polar to both sides of \eqref{4.2}, we have
\begin{eqnarray*}
\bigcap_{i\in I}(A_i+\eta B_X)&\subset& \left(\overline{\rm co}^{w^*}\Big(\bigcup_{i\in I}(A_i^{\circ}\#\frac{1}{\eta}B_{X^*})\Big)\right)^{\circ}\subset \Big([0, x^*]\#\overline{\rm co}^{w^*}\Big(\bigcup_{i\in I}A_i^{\circ}\Big)\Big)^{\circ}\\
&=&\Big(\bigcap_{i\in I}A_i+\{x^*\}^{\circ}\Big)^{\circ\circ}=\overline{\bigcap_{i\in I}A_i+\{x^*\}^{\circ}}
\end{eqnarray*}
where the first equality follows from \eqref{4.7} and the second equality holds by Lemma 2.1(d). This means that $\{A_i: i\in I\}$ has the weak normal property with $\eta_{x^*}:=1/\tilde{\eta}_{x^*}$ for each $x^*\in X^*$. The proof is completed.
\end{proof}

By using the inverse sum of convex sets, we obtain the quantitative relationship between the normal property and its dual form through the following theorem.

\begin{them}
Let $\{A_i: i\in I\}$ be a collection of convex sets in $X$ with
$0\in \bigcap_{i\in I}A_i$ and let $\eta>0$. Suppose that $\{A_i:
i\in I\}$ has the closed intersection property. If
\begin{equation}\label{4.8}
\bigcap_{i\in I}(A_i+\eta B_X)\subset \Big(\bigcap_{i\in I}A_i\Big)+B_X,
\end{equation}
then for any $\hat{\eta}\in (0, \eta)$, one has
\begin{equation}\label{4.9}
B_{X^*}\#\overline{\rm co}^{w^*}\Big(\bigcup_{i\in I}A_i^{\circ}\Big)\subset \overline{\rm co}^{w^*}\Big(\bigcup_{i\in I}(A_i^{\circ}\#\frac{1}{\hat{\eta}}B_{X^*})\Big).
\end{equation}
Conversely, if \eqref{4.9} holds, then
\begin{equation}\label{4.10}
\bigcap_{i\in I}(A_i+\hat{\eta} B_X)\subset \overline{\Big(\bigcap_{i\in
I}A_i\Big)+B_X}.
\end{equation}
\end{them}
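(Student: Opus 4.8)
The plan is to follow, almost verbatim, the scheme used in the proof of Theorem 4.2, with the set $\{x^*\}^{\circ}$ replaced by the unit ball $B_X$ (so that the dual object $[0,x^*]=\{x^*\}^{\circ\circ}$ is replaced by $B_X^{\circ}=B_{X^*}$), together with one extra elementary observation exploiting the strict inequality $\hat\eta<\eta$. For the first implication, fix $\hat\eta\in(0,\eta)$ and start from the inclusion $\overline{A_i+\hat\eta B_X}\subset A_i+\eta B_X$, valid for every $i\in I$: indeed, if $a_n+\hat\eta b_n\to x$ with $a_n\in A_i$ and $b_n\in B_X$, picking $n$ with $\|x-a_n-\hat\eta b_n\|<\eta-\hat\eta$ shows $x\in a_n+\eta B_X$. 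Intersecting over $i$ and invoking \eqref{4.8} yields $\bigcap_{i\in I}\overline{A_i+\hat\eta B_X}\subset\big(\bigcap_{i\in I}A_i\big)+B_X$. Now apply the polar operation, which reverses inclusions (Lemma 2.1(b)). For the left-hand side, Proposition 4.1(a) gives $\big(\big(\bigcap_{i\in I}A_i\big)+B_X\big)^{\circ}=\big(B_X+\bigcap_{i\in I}A_i\big)^{\circ}=B_X^{\circ}\#\big(\bigcap_{i\in I}A_i\big)^{\circ}$, and the closed intersection hypothesis together with Lemma 2.1(a),(e) (as in \eqref{4.6}) gives $\big(\bigcap_{i\in I}A_i\big)^{\circ}=\overline{\rm co}^{w^*}\big(\bigcup_{i\in I}A_i^{\circ}\big)$; since $B_X^{\circ}=B_{X^*}$, this is exactly the left-hand side of \eqref{4.9}. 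For the right-hand side, each $\overline{A_i+\hat\eta B_X}$ is closed and contains $0$ (as $0\in A_i$), so Lemma 2.1(e) gives $\big(\bigcap_{i\in I}\overline{A_i+\hat\eta B_X}\big)^{\circ}=\overline{\rm co}^{w^*}\big(\bigcup_{i\in I}(\overline{A_i+\hat\eta B_X})^{\circ}\big)$, while Lemma 2.1(a),(c) and Proposition 4.1(a) identify $(\overline{A_i+\hat\eta B_X})^{\circ}=(A_i+\hat\eta B_X)^{\circ}=A_i^{\circ}\#\tfrac{1}{\hat\eta}B_{X^*}$. Combining the two computations produces \eqref{4.9}.

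For the converse, assume \eqref{4.9}. Mimicking the sufficiency part of Theorem 4.2, one has $\bigcap_{i\in I}(A_i+\hat\eta B_X)\subset\bigcap_{i\in I}\overline{A_i+\hat\eta B_X}\subset\big(\bigcap_{i\in I}\overline{A_i+\hat\eta B_X}\big)^{\circ\circ}$, and computing the inner polar exactly as above shows $\big(\bigcap_{i\in I}\overline{A_i+\hat\eta B_X}\big)^{\circ\circ}=\big(\overline{\rm co}^{w^*}\big(\bigcup_{i\in I}(A_i^{\circ}\#\tfrac{1}{\hat\eta}B_{X^*})\big)\big)^{\circ}$. Taking polars in \eqref{4.9} (Lemma 2.1(b)) gives $\big(\overline{\rm co}^{w^*}\big(\bigcup_{i\in I}(A_i^{\circ}\#\tfrac{1}{\hat\eta}B_{X^*})\big)\big)^{\circ}\subset\big(B_{X^*}\#\overline{\rm co}^{w^*}\big(\bigcup_{i\in I}A_i^{\circ}\big)\big)^{\circ}$, and, since $B_{X^*}=B_X^{\circ}$ and $\overline{\rm co}^{w^*}\big(\bigcup_{i\in I}A_i^{\circ}\big)=\big(\bigcap_{i\in I}A_i\big)^{\circ}$ by the closed intersection hypothesis, Proposition 4.1(a) gives $B_{X^*}\#\overline{\rm co}^{w^*}\big(\bigcup_{i\in I}A_i^{\circ}\big)=\big(B_X+\bigcap_{i\in I}A_i\big)^{\circ}$, whose polar equals $\overline{\big(\bigcap_{i\in I}A_i\big)+B_X}$ by Lemma 2.1(d) (note $0\in B_X+\bigcap_{i\in I}A_i$). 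Chaining these inclusions gives \eqref{4.10}.

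The only genuinely delicate point is the very first step: the hypothesis \eqref{4.8} controls $\bigcap_{i\in I}(A_i+\eta B_X)$, whereas the polarity identities require the \emph{closed} sets $\overline{A_i+\hat\eta B_X}$, and it is precisely the strict inequality $\hat\eta<\eta$ that lets one absorb the closure into a ball of radius $\eta$ so that \eqref{4.8} becomes applicable. Everything after that is the polarity bookkeeping of Lemma 2.1 and Proposition 4.1, structurally identical to the proof of Theorem 4.2; the only thing to check is that every set to which Lemma 2.1(d),(e) or Proposition 4.1(a) is applied is closed (or is replaced by its closure) and contains the origin, which holds throughout because $0\in\bigcap_{i\in I}A_i$ and $0\in B_X$.
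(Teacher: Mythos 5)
Your proposal is correct and follows essentially the same route as the paper: reduce to the closed sets $\overline{A_i+\hat\eta B_X}$ via the strict inequality $\hat\eta<\eta$, apply \eqref{4.8}, and then translate both inclusions by polarity using Lemma 2.1(a),(c),(d),(e), Proposition 4.1(a) and the identity $\big(\bigcap_{i\in I}A_i\big)^{\circ}=\overline{\rm co}^{w^*}\big(\bigcup_{i\in I}A_i^{\circ}\big)$ from the closed intersection property. You merely spell out the bookkeeping (and the absorption $\overline{A_i+\hat\eta B_X}\subset A_i+\eta B_X$) in more detail than the paper does.
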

\begin{proof} Let $\hat{\eta}\in (0, \eta)$. Then \eqref{4.8} implies that
$$
\bigcap_{i\in I}(\overline{A_i+\hat{\eta} B_X})\subset\bigcap_{i\in I}(A_i+\eta B_X)\subset \Big(\bigcap_{i\in I}A_i\Big)+B_X.
$$
Taking the polar, we obtain
\begin{equation}\label{4.11}
\Big(\big(\bigcap_{i\in I}A_i\big)+B_X\Big)^{\circ}\subset\Big(\bigcap_{i\in I}(\overline{A_i+\hat{\eta} B_X})\Big)^{\circ}
\end{equation}
By using Lemma 2.1(e), Proposition 4.1(a) and \eqref{4.6}, we have
\eqref{4.9} holds.

Conversely, assume that \eqref{4.9} holds. Noting that $\{A_i: i\in I\}$ has the closed intersection property, it follows from \eqref{4.6}, Proposition 4.1(a) and Lemma 2.1(d) that
$$
\Big(B_{X^*}\#\overline{\rm co}^{w^*}\big(\bigcup_{i\in I}A^{\circ}_i\big)\Big)^{\circ}=\Big(B_X+\bigcap_{i\in I}A_i\Big)^{\circ\circ}=\overline{B_X+\bigcap_{i\in I}A_i}.
$$
This implies that \eqref{4.10} holds. The proof is completed.
\end{proof}

Now, we give the quantitative measurements of the normal property and
its dual form for arbitrarily many convex sets.\\

\noindent{\bf Definition 4.2.} Let $\{A_i: i\in I\}$ be a collection of
convex sets in $X$. The {\it normality constant} for the collection
$\{A_i: i\in I\}$ is defined as follows:
\begin{equation}\label{4.12}
\lambda_N(A_i:i\in I):=\sup\{\eta\geq 0: \eqref{4.8}\ {\rm holds}\
{\rm with}\ \eta\geq 0\},
\end{equation}
and the {\it dual normality constant} for the collection
$\{A_i^{\circ}: i\in I\}$ is defined by
\begin{equation*}\label{4.13}
\lambda_D(A^{\circ}_i:i\in I):=\sup\left\{\eta\geq 0:
B_{X^*}\#\overline{\rm co}^{w^*}\Big(\bigcup_{i\in
I}A_i^{\circ}\Big)\subset \overline{\rm co}^{w^*}\Big(\bigcup_{i\in
I}(A_i^{\circ}\#\frac{1}{\eta}B_{X^*})\Big)\right\}
\end{equation*}
where $\frac{1}{\eta}B_{X^*}$ is defined to be the whole space $X^*$
when $\eta=0$.

Obviously $\{A_i: i\in I\}$ has the normal property if and only if
$\lambda_N(A_i: i\in I)>0$. The following corollary shows the precise
quantitative equation for the collection of arbitrarily many convex
cones with the closed intersection property.

\begin{coro}
Let $\{K_i: i\in I\}$ be a collection of convex cones in $X$ and have the closed intersection property. Then
\begin{equation}\label{4.14}
\lambda_N(K_i:i\in I)=\lambda_D(K^{\circ}_i:i\in I).
\end{equation}
\end{coro}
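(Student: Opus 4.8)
The plan is to read off both inequalities between $\lambda_N(K_i:i\in I)$ and $\lambda_D(K^{\circ}_i:i\in I)$ directly from Theorem 4.2, using the conical structure of the sets to remove the closure that the converse part of Theorem 4.2 introduces on the right-hand side of \eqref{4.10}. Note that $0\in\bigcap_{i\in I}K_i$ automatically, and by hypothesis $\{K_i:i\in I\}$ has the closed intersection property, so Theorem 4.2 is applicable. Note also that both constants are $\ge 0$ (the value $0$ belongs to each of the defining sets), so the contribution of $\eta=0$ is immaterial and below I may restrict to $\eta>0$.

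First I would prove $\lambda_N(K_i:i\in I)\le\lambda_D(K^{\circ}_i:i\in I)$. Let $\eta>0$ be such that \eqref{4.8} holds for $\{K_i:i\in I\}$. By the first assertion of Theorem 4.2, inclusion \eqref{4.9} then holds with exponent $\hat\eta$ for every $\hat\eta\in(0,\eta)$, so $\hat\eta\le\lambda_D(K^{\circ}_i:i\in I)$ for every such $\hat\eta$; letting $\hat\eta\uparrow\eta$ gives $\eta\le\lambda_D(K^{\circ}_i:i\in I)$, and taking the supremum over all admissible $\eta$ yields the claimed inequality.

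Next I would prove the reverse inequality $\lambda_D(K^{\circ}_i:i\in I)\le\lambda_N(K_i:i\in I)$. Let $\eta>0$ be such that the inclusion defining $\lambda_D$ holds with $\frac1\eta B_{X^*}$. By the converse assertion of Theorem 4.2 (applied with the free parameter there equal to $\eta$) we obtain
\begin{equation*}
\bigcap_{i\in I}(K_i+\eta B_X)\subset\overline{\Big(\bigcap_{i\in I}K_i\Big)+B_X}.
\end{equation*}
Now fix $\eta'\in(0,\eta)$ and put $t:=\eta'/\eta\in(0,1)$. Since each $K_i$ and $\bigcap_{i\in I}K_i$ are convex cones, $tK_i=K_i$ and $t\bigl(\bigcap_{i\in I}K_i\bigr)=\bigcap_{i\in I}K_i$, so multiplying the displayed inclusion by $t$ and distributing the positive scalar over the intersection gives
\begin{equation*}
\bigcap_{i\in I}(K_i+\eta' B_X)=t\bigcap_{i\in I}(K_i+\eta B_X)\subset t\,\overline{\Big(\bigcap_{i\in I}K_i\Big)+B_X}=\overline{\Big(\bigcap_{i\in I}K_i\Big)+tB_X}.
\end{equation*}
Because $t<1$, one has $\overline{C+tB_X}\subset C+B_X$ for any subset $C$ of $X$ (if $c_n+tb_n\to x$ with $c_n\in C$ and $\|b_n\|\le 1$, then $\|x-c_n\|\le\|x-(c_n+tb_n)\|+t<1$ for $n$ large, so $x\in c_n+B_X$). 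Hence \eqref{4.8} holds with $\eta'$ in place of $\eta$, so $\eta'\le\lambda_N(K_i:i\in I)$; letting $\eta'\uparrow\eta$ and then taking the supremum over admissible $\eta$ gives $\lambda_D(K^{\circ}_i:i\in I)\le\lambda_N(K_i:i\in I)$. Combining the two inequalities proves \eqref{4.14}.

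The only step that is not purely formal is the passage from \eqref{4.10} back to \eqref{4.8} in the second part: in an infinite-dimensional Banach space $\bigl(\bigcap_{i\in I}K_i\bigr)+B_X$ need not be closed, so \eqref{4.10} is genuinely weaker than \eqref{4.8}. The device that overcomes this is the conical rescaling above, which trades the $t<1$ slack created by the closure for the exact radius $1$; it is precisely here that it matters that the $K_i$ are cones and not merely convex sets, whereas the closed intersection property is needed only to have Theorem 4.2 at our disposal.
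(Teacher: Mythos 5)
Your proof is correct and follows essentially the same route as the paper: the inequality $\lambda_N\le\lambda_D$ comes from the implication \eqref{4.8}$\Rightarrow$\eqref{4.9}, and the reverse inequality from \eqref{4.9}$\Rightarrow$\eqref{4.10} together with a conical rescaling that absorbs the closure into a slightly enlarged ball. The paper writes this last step as $\overline{\big(\bigcap_{i\in I}K_i\big)+B_X}\subset\big(\bigcap_{i\in I}K_i\big)+(1+\varepsilon)B_X$ followed by division by $1+\varepsilon$, which is the same device as your multiplication by $t=\eta'/\eta<1$.
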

\begin{proof}
From the implication \eqref{4.8}$\Rightarrow$\eqref{4.9} in Theorem 4.2, one has
\begin{equation}\label{4.15}
0\leq \lambda_N(K_i:i\in I)\leq\lambda_D(K^{\circ}_i:i\in I).
\end{equation}
If $\lambda_D(K^{\circ}_i:i\in I)=0$, then \eqref{4.14} holds. Next,
we suppose that $\lambda_D(K^{\circ}_i:i\in I)>0$. Take arbitrarily $\eta\in (0, \lambda_D(K^{\circ}_i:i\in I))$ and let
$\varepsilon>0$. It follows from
\eqref{4.9}$\Rightarrow$\eqref{4.10} that
$$
\bigcap_{i\in I}(K_i+\eta B_X)\subset \overline{\Big(\bigcap_{i\in I}K_i\Big)+B_X}\subset\Big(\bigcap_{i\in I}K_i\Big)+(1+\varepsilon)B_X.
$$
This implies that
$$
\bigcap_{i\in I}\big(K_i+\frac{\eta}{1+\varepsilon}
B_X\big)\subset\Big(\bigcap_{i\in I}K_i\Big)+B_X
$$
as each $K_i$ is a cone and consequently $\lambda_N(K_i:i\in I)\geq\frac{\eta}{1+\varepsilon}$ by \eqref{4.12}. Taking limits as $\varepsilon\rightarrow 0^+$ and $\eta\rightarrow \lambda_D(K^{\circ}_i:i\in I)$, we have $\lambda_N(K_i:i\in I)\geq \lambda_D(K^{\circ}_i:i\in I)$. Hence \eqref{4.14} holds by \eqref{4.15}. The proof is completed.
\end{proof}

Let $\eta>0$. Recall from [7, 9, 20] that subsets $P_1, \cdots, P_m$
in $X$ is said to have Jamenson property $(G_{\eta})$, if
\begin{equation}
\Big(\sum_{i=1}^mP_i\Big)\cap B_X\subset \sum_{i=1}^m(P_i\cap
\frac{1}{\eta}B_X).
\end{equation}
In the case when each $P_i$ is a weak$^*$-closed convex cone in the
dual space $X^*$, the authors [30] obtained one important
characterization for Jamenson property ($G_{\eta}$); that is,
$\{P_1, \cdots, P_m\}$ in $X$ is said to have property
$(G_{\eta})$ if and only if for each $x^*\in
\overline{\sum_{i=1}^mP_i}^{w^*}$ there exist generalized sequences
$\{x_{i, k}^*\}$ in $P_i$ $(1\leq i\leq m)$  such that
\begin{equation}\label{4.17}
\sum_{i=1}^m x_{i, k}^*\stackrel{w^*}\longrightarrow x^*\ \ {\rm
and}\ \ \sum_{i=1}^m\|x_{i, k}^*\|\leq \frac{1}{\eta}\|x^*\|.
\end{equation}
Readers could refer to [30, Proposition 5.1] for details and the
proof.

In the spirit of the equivalence in \eqref{4.17}, the authors [30]
introduced and studied the extended Jamenson property $(G_{\eta})$ for a
collection of arbitrarily many weak$^*$-closed convex cone in the
dual space $X^*$.\\

\noindent{\bf Definition 4.3.} Let $\{P_i: i\in I\}$ be a collection of
weak$^*$-closed convex cone in $X^*$ and $\eta>0$. We say that
$\{P_i: i\in I\}$ has {\it property $(G_{\eta})$}, if for any
$x^*\in\overline{\sum\limits_{i\in I}P_i}^{w^*}$ there exist a
generalized sequences $\{I_k\}$ of finite subsets of $I$ and
$x^*_{i, k}\in P_i (i\in I_k)$ such that
\begin{equation}\label{4.18}
\sum_{i\in I_k}x_{i, k}^*\stackrel{w^*}\longrightarrow x^*\ \ {\rm
and}\ \ \sum_{i\in I_k}\|x_{i, k}^*\|\leq \frac{1}{\eta}\|x^*\|.
\end{equation}

By the extended Jamenson property $(G_{\eta})$, we obtain the
following theorem about the characterization of normal property
for an infinite system of convex cones in a Banach space.

\begin{them}
Let $\{K_i: i\in I\}$ be a collection of convex cones in $X$. Then
$\{K_i: i\in I\}$ has the normal property if and only if $\{K_i:
i\in I\}$ has the closed intersection property and there exists
$\eta>0$ such that $\{K_i^{\circ}: i\in I\}$ has property
$(G_{\eta})$.
\end{them}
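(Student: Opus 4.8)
The plan is to read off this theorem from the quantitative identity $\lambda_N(K_i:i\in I)=\lambda_D(K_i^{\circ}:i\in I)$ of Corollary~4.1 (valid under the closed intersection property), after rewriting the inclusion of Definition~4.2 that defines $\lambda_D$ in the language of property $(G_\eta)$ (Definition~4.3).

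\emph{Step 1 (cone reductions).} Since each $K_i$ is a convex cone, $K_i^{\circ}=K_i^{\ominus}$ is a weak$^*$-closed convex cone; because the $K_i^{\circ}$ are cones containing $0$, a direct check gives $\overline{\rm co}^{w^*}\big(\bigcup_{i\in I}K_i^{\circ}\big)=\overline{\sum_{i\in I}K_i^{\circ}}^{w^*}$, again a weak$^*$-closed convex cone. By Proposition~4.1(b) (and the evident symmetry of $\#$ in its two arguments), $K_i^{\circ}\#\frac{1}{\eta}B_{X^*}=K_i^{\circ}\cap\frac{1}{\eta}B_{X^*}$ and $B_{X^*}\#\overline{\rm co}^{w^*}\big(\bigcup_{i\in I}K_i^{\circ}\big)=B_{X^*}\cap\overline{\sum_{i\in I}K_i^{\circ}}^{w^*}$. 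Hence, for $\eta>0$, the inclusion defining $\lambda_D(K_i^{\circ}:i\in I)$ is equivalent to
\[
B_{X^*}\cap\overline{\sum_{i\in I}K_i^{\circ}}^{w^*}\subset\overline{\rm co}^{w^*}\Big(\bigcup_{i\in I}\big(K_i^{\circ}\cap\tfrac{1}{\eta}B_{X^*}\big)\Big),
\]
which I will call $(\star_\eta)$.

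\emph{Step 2 (the key equivalence).} I would then prove that $(\star_\eta)$ holds if and only if $\{K_i^{\circ}:i\in I\}$ has property $(G_\eta)$. For the forward implication, let $x^*\in B_{X^*}\cap\overline{\sum_{i\in I}K_i^{\circ}}^{w^*}$; property $(G_\eta)$ provides a net of finite sets $I_k\subset I$ and $x^*_{i,k}\in K_i^{\circ}$ with $\sum_{i\in I_k}x^*_{i,k}\stackrel{w^*}{\longrightarrow}x^*$ and $S_k:=\sum_{i\in I_k}\|x^*_{i,k}\|\le\frac{1}{\eta}\|x^*\|\le\frac{1}{\eta}$. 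When $S_k>0$, writing $\sum_{i\in I_k}x^*_{i,k}=\sum_{i\in I_k}\frac{\|x^*_{i,k}\|}{S_k}\cdot\frac{S_k\,x^*_{i,k}}{\|x^*_{i,k}\|}$ exhibits it as a convex combination of the points $\frac{S_k\,x^*_{i,k}}{\|x^*_{i,k}\|}\in K_i^{\circ}\cap\frac{1}{\eta}B_{X^*}$ (vanishing terms discarded), and when $S_k=0$ it equals $0$; in either case $\sum_{i\in I_k}x^*_{i,k}\in{\rm co}\big(\bigcup_{i\in I}(K_i^{\circ}\cap\frac{1}{\eta}B_{X^*})\big)$, so its weak$^*$ limit $x^*$ lies in the right-hand side of $(\star_\eta)$. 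For the converse, assume $(\star_\eta)$ and let $x^*\in\overline{\sum_{i\in I}K_i^{\circ}}^{w^*}$; the case $x^*=0$ is trivial, and if $x^*\neq0$ then $x^*/\|x^*\|\in B_{X^*}\cap\overline{\sum_{i\in I}K_i^{\circ}}^{w^*}$, so it is the weak$^*$ limit of a net $z_k=\sum_{i\in I_k}\mu_{i,k}w_{i,k}$ with $I_k\subset I$ finite, $\mu_{i,k}\ge0$, $\sum_{i\in I_k}\mu_{i,k}=1$ and $w_{i,k}\in K_i^{\circ}\cap\frac{1}{\eta}B_{X^*}$ (collecting same-index terms, one summand per index); setting $x^*_{i,k}:=\|x^*\|\mu_{i,k}w_{i,k}\in K_i^{\circ}$ gives $\sum_{i\in I_k}x^*_{i,k}=\|x^*\|z_k\stackrel{w^*}{\longrightarrow}x^*$ and $\sum_{i\in I_k}\|x^*_{i,k}\|\le\frac{1}{\eta}\|x^*\|$, i.e.\ property $(G_\eta)$.

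\emph{Step 3 (assembling the theorem).} If $\{K_i:i\in I\}$ has the closed intersection property and $\{K_i^{\circ}:i\in I\}$ has property $(G_\eta)$ for some $\eta>0$, then $(\star_\eta)$ holds by Step~2, so $\lambda_D(K_i^{\circ}:i\in I)\ge\eta>0$, and Corollary~4.1 gives $\lambda_N(K_i:i\in I)=\lambda_D(K_i^{\circ}:i\in I)>0$, i.e.\ $\{K_i:i\in I\}$ has the normal property. Conversely, if $\{K_i:i\in I\}$ has the normal property, then by Proposition~3.1(ii) it has the closed intersection property, $\lambda_N(K_i:i\in I)>0$, and hence $\lambda_D(K_i^{\circ}:i\in I)>0$ by Corollary~4.1; any $\eta\in\big(0,\lambda_D(K_i^{\circ}:i\in I)\big)$ then satisfies $(\star_\eta)$ — since $(\star_{\eta_0})$ for an admissible $\eta_0$ forces $(\star_\eta)$ for every $\eta\in(0,\eta_0]$, because $\frac{1}{\eta}B_{X^*}\supset\frac{1}{\eta_0}B_{X^*}$ — so $\{K_i^{\circ}:i\in I\}$ has property $(G_\eta)$ by Step~2. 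The crux is Step~2: the one delicate point is the back-and-forth passage between partial sums $\sum_{i\in I_k}x^*_{i,k}$ and convex combinations of elements of $\bigcup_{i\in I}(K_i^{\circ}\cap\frac{1}{\eta}B_{X^*})$, making sure the bound $\frac{1}{\eta}\|x^*\|$ is matched exactly in both directions and that zero terms cause no trouble; the topology (nets, weak$^*$ closures) is routine.
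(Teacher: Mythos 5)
Your proposal is correct and follows essentially the same route as the paper: the core of both arguments is the equivalence between the dual inclusion $B_{X^*}\cap\overline{\sum_{i\in I}K_i^{\circ}}^{w^*}\subset\overline{\rm co}^{w^*}\big(\bigcup_{i\in I}(K_i^{\circ}\cap\frac{1}{\eta}B_{X^*})\big)$ and property $(G_\eta)$, established by the same normalization back and forth between partial sums and convex combinations. The only (cosmetic) difference is that you route the geometric half through Corollary 4.1 and the constants $\lambda_N,\lambda_D$, whereas the paper applies Theorem 4.2 directly together with the cone-scaling adjustments that Corollary 4.1 already packages.
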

\begin{proof}
{\bf The necessity part.} Suppose that $\{K_i: i\in I\}$ has the
normal property. Then the closed intersection property of $\{K_i:
i\in I\}$ is immediate from Proposition 3.1(ii), and there exists
$\hat{\eta}>0$ such that
\begin{equation}\label{4.19}
\bigcap_{i\in I}(K_i+\hat{\eta} B_X)\subset\Big(\bigcap_{i\in
I}K_i\Big)+ B_X.
\end{equation}
Let $\eta\in (0, \hat{\eta})$ and $x^*\in
\overline{\sum\limits_{i\in
I}K^{\circ}_i}^{w^*}\big\backslash\{0\}$. By Theorem 4.2, we obtain
\begin{equation}\label{4.20}
B_{X^*}\#\overline{\rm co}^{w^*}\Big(\bigcup_{i\in
I}K_i^{\circ}\Big)\subset \overline{\rm co}^{w^*}\Big(\bigcup_{i\in
I}(K_i^{\circ}\#\frac{1}{\eta}B_{X^*})\Big),
\end{equation}
i.e.,
\begin{equation}\label{4.21}
B_{X^*}\cap\Big(\overline{\sum\limits_{i\in
I}K^{\circ}_{i}}^{w^*}\Big)\subset \overline{\rm
co}^{w^*}\Big(\bigcup_{i\in
I}(K_i^{\circ}\cap\frac{1}{\eta}B_{X^*})\Big)
\end{equation}
Noting that $x^*\in \overline{\sum\limits_{i\in
I}K^{\circ}_{i}}^{w^*}\big\backslash\{0\}$, it follows from
\eqref{4.21} that there exist a generalized sequences
$\{\tilde{x}_k^*\}$ in ${\rm co}\big(\bigcup_{i\in
I}(K_i^{\circ}\cap\frac{1}{\eta}B_{X^*})\big)$ such that
$\tilde{x}_k^*\stackrel{w^*}\longrightarrow \frac{x^*}{\|x^*\|}$.
Thus, for each $k$, there are a finite subset $I_k$ of $I$,
nonnegative scalars $\mu_{i, k}\geq 0$ and $\tilde x_{i, k}^*\in
K_i^{\circ}\cap \frac{1}{\eta}B_{X^*}$ $(\forall i\in I_k)$ such
that
\begin{equation}\label{4.22}
\sum_{i\in I_k}\mu_{i, k}=1\ \ {\rm and}\ \ \tilde{x}_k^*=\sum_{i\in
I_k}\mu_{i, k}\tilde{x}^*_{i, k}.
\end{equation}
Let $x_{i, k}^*:=\mu_{i, k}\|x^*\|\cdot\tilde{x}^*_{i, k}$ for each
$i\in I_k$. Then \eqref{4.22} implies that $x_{i, k}^*\in
K^{\circ}_i (\forall i\in I_k)$,
$$
\sum_{i\in I_k}x^*_{i, k}\stackrel{w^*}\longrightarrow x^*\ \ {\rm
and}\ \ \sum_{i\in I_k}\|x^*_{i, k}\|\leq \frac{1}{\eta}\|x^*\|.
$$
Hence $\{K^{\circ}_i: i\in I\}$ has property $(G_{\eta})$.

{\bf The sufficiency part.} Take $\eta>0$ such that $\{K_i^{\circ}:
i\in I\}$ has property $(G_{\eta})$. We claim that \eqref{4.20} and
\eqref{4.21} hold.

Indeed, for any $x^*\in \overline{\sum\limits_{i\in
I}K^{\circ}_i}^{w^*}\big\backslash\{0\}$ with $\|x^*\|\leq 1$, by
property $(G_{\eta})$, there exists a generalized sequences
$\{I_k\}$ of finite subsets of $I$ and $x^*_{i, k}\in
K_i^{\circ}\backslash\{0\} (i\in I_k)$ such that
\begin{equation}\label{4.23}
\sum_{i\in I_k}x_{i, k}^*\stackrel{w^*}\longrightarrow x^*\ \ {\rm
and}\ \ \sum_{i\in I_k}\|x_{i, k}^*\|\leq \frac{1}{\eta}\|x^*\|\leq
\frac{1}{\eta}.
\end{equation}
Denote
$$
y_{i, k}^*:=\frac{\sum\limits_{j\in I_k}\|x^*_{j, k}\|}{\|x^*_{i,
k}\|}x^*_{i, k}\ \ {\rm and}\ \ \mu_{i, k}:=\frac{\|x^*_{i,
k}\|}{\sum\limits_{j\in I_k}\|x^*_{j, k}\|}\ \ \forall i\in I_k.
$$
Then \eqref{4.23} implies that $y_{i, k}^*\in
K_i^{\circ}\cap\frac{1}{\eta}B_{X^*}$ and $\sum\limits_{i\in
I_k}x^*_{i, k}=\sum\limits_{i\in I_k}\mu_{i, k}y^*_{i, k}$. Thus
\eqref{4.21} holds and so does \eqref{4.20}.

Let $\hat{\eta}\in (0, \eta)$. Since $\{K_i: i\in I\}$ has the
closed intersection property, it follows from \eqref{4.20} and the
implication \eqref{4.9}$\Rightarrow$\eqref{4.10} in Theorem 4.2 that
\begin{equation}\label{4.24}
\bigcap_{i\in I}(K_i+\eta B_X)\subset \overline{\Big(\bigcap_{i\in
I}K_i\Big)+B_X}\subset\Big(\bigcap_{i\in
I}K_i\Big)+\frac{\eta}{\hat{\eta}}B_X.
\end{equation}
By multiplying both side of \eqref{4.24} by $\hat{\eta}/\eta$, we
obtain
$$
\bigcap_{i\in I}(K_i+\hat{\eta} B_X)\subset \Big(\bigcap_{i\in
I}K_i\Big)+B_X
$$
since each $K_i$ is a cone. Therefore, $\{K_i: i\in I\}$ has the
normal property with the constant $\hat{\eta}$. The proof is
completed.
\end{proof}

For the case that $\{P_i: i\in I\}$ is a collection of
weak$^*$-closed convex cones in $X^*$, we define the quantitative
measurement of the extended Jamenson property as follows:
\begin{equation*}\label{4.25}
\lambda_G(P_i:i\in I):=\sup\Big\{\eta\geq 0: \{P_i: i\in I\} \ {\rm
has} \ {\rm property}\  (G_{\eta})\Big\}.
\end{equation*}

The following corollary is immediate from Theorem 4.3 and Corollary
4.1 which shows the quantitative equations among measurements of the
normal property, the dual normality and the extended Jamenson
property.

\begin{coro}
Let $\{K_i: i\in I\}$ be a collection of convex cones in $X$ with
the closed intersection property. Then
$$
\lambda_D(K^{\circ}_i:i\in I)=\lambda_N(K_i:i\in
I)=\lambda_G(K^{\circ}_i:i\in I).
$$
\end{coro}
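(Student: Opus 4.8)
The plan is to read off the two equalities from results already in hand: the first one, $\lambda_N(K_i:i\in I)=\lambda_D(K^{\circ}_i:i\in I)$, is precisely Corollary 4.1, which applies because $\{K_i:i\in I\}$ is a collection of convex cones satisfying the closed intersection property. So the entire content of the corollary reduces to the equality $\lambda_N(K_i:i\in I)=\lambda_G(K^{\circ}_i:i\in I)$, which I would obtain by proving the two inequalities separately and extracting the quantitative content of the two implications established inside the proof of Theorem 4.3. Since by definition all three constants are $\geq 0$, the case where one of them is $0$ needs no separate treatment.

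For $\lambda_N(K_i:i\in I)\leq\lambda_G(K^{\circ}_i:i\in I)$ I would fix any $\eta>0$ admissible in the definition of $\lambda_N$, i.e.\ such that \eqref{4.8} holds. Then $\{K_i:i\in I\}$ has the normal property with constant $\eta$, and together with the closed intersection property this is exactly the hypothesis used in the necessity part of Theorem 4.3 (there, \eqref{4.19} plays the role of \eqref{4.8}). That argument shows that for every $\eta'\in(0,\eta)$ the family $\{K^{\circ}_i:i\in I\}$ has property $(G_{\eta'})$, whence $\lambda_G(K^{\circ}_i:i\in I)\geq\eta'$ for all such $\eta'$, and therefore $\lambda_G(K^{\circ}_i:i\in I)\geq\eta$. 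Taking the supremum over all $\eta$ satisfying \eqref{4.8} yields the desired inequality.

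For the reverse inequality $\lambda_G(K^{\circ}_i:i\in I)\leq\lambda_N(K_i:i\in I)$ I would fix any $\eta>0$ such that $\{K^{\circ}_i:i\in I\}$ has property $(G_{\eta})$. Invoking the sufficiency part of Theorem 4.3 (which uses only the closed intersection property and property $(G_\eta)$), for every $\hat{\eta}\in(0,\eta)$ one gets $\bigcap_{i\in I}(K_i+\hat{\eta}B_X)\subset(\bigcap_{i\in I}K_i)+B_X$, so that $\hat{\eta}$ is admissible in the definition of $\lambda_N$ and hence $\lambda_N(K_i:i\in I)\geq\hat{\eta}$ for all such $\hat{\eta}$, giving $\lambda_N(K_i:i\in I)\geq\eta$. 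Taking the supremum over all $\eta$ for which $(G_\eta)$ holds completes this inequality, and combining the two inequalities with Corollary 4.1 finishes the proof.

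The only step requiring attention — and the one most likely to cause a slip — is the bookkeeping with the strict inequalities: each direction of Theorem 4.3 produces the target property only for exponents \emph{strictly} below the one in its hypothesis, so in each direction one must first pass from ``$\geq\eta'$ for all $\eta'<\eta$'' to ``$\geq\eta$'' and only then take suprema. Beyond this routine limiting argument there is no real obstacle; nothing new about inverse sums, polars, or weak$^*$-closed cones is needed here, since all of that work is already absorbed into Theorem 4.2, Theorem 4.3 and Corollary 4.1.
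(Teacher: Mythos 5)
Your proof is correct and follows exactly the route the paper intends: the paper gives no argument beyond declaring the corollary ``immediate from Theorem 4.3 and Corollary 4.1,'' and your write-up supplies precisely the missing quantitative bookkeeping (extracting from each half of Theorem 4.3 the property for all constants strictly below the hypothesis constant, passing to the limit, then taking suprema). Nothing to add.
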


\setcounter{equation}{0}
\section{CHIP, normal CHIP and linear regularity} In this section,
we mainly focus on CHIP, normal CHIP and linear regularity, and
establish the equivalent interrelationship among CHIP, linear regularity,
normal property and Jamenson property. The notions of CHIP and strong CHIP were introduced by
Chui, Deustsch and Ward [10] and Deutsch, Li and Ward [15], respectively and have been studied extensively by many authors(cf. [3, 7, 8, 12, 14, 19, 21, 30] and references therein). For the further study on CHIP and strong CHIP,
Bakan, Deutsch and Li [3] introduced the concept of normal conical
hull intersection property (normal CHIP) for finite convex sets in a
Hilbert space. We begin by recalling definitions on several kinds of CHIP for a
collection of arbitrarily many convex sets in a Banach space.\\

\noindent{\bf Definition 5.1.} Let $\{A_i: i\in I\}$ be a collection of
convex sets in $X$ with the nonempty intersection.

(i) $\{A_i: i\in I\}$ is said to have the {\it conical hull
intersection property (CHIP)} at $x\in\bigcap_{i\in I}A_i$, if
\begin{equation}\label{5.1}
\overline{\rm cone}\Big(\bigcap_{i\in I}A_i-x\Big)=\bigcap_{i\in
I}\overline{\rm cone}(A_i-x).
\end{equation}
We say that $\{A_i: i\in I\}$ has the {\it CHIP}, if it has the CHIP
for each $x\in\bigcap_{i\in I}A_i$.

(ii)$\{A_i: i\in I\}$ is said to have the {\it strong conical hull
intersection property (strong CHIP)} at $x\in\bigcap_{i\in I}A_i$,
if
\begin{equation}\label{5.2}
\Big(\bigcap_{i\in I}A_i-x\Big)^{\ominus}=\sum_{i\in
I}(A_i-x)^{\ominus}.
\end{equation}
We say that  $\{A_i: i\in I\}$ has the {\it strong CHIP}, if it has
the strong CHIP for each $x\in\bigcap_{i\in I}A_i$.

(iii) $\{A_i: i\in I\}$ is said to have the {\it normal conical hull
intersection property (normal CHIP)} at $x\in\bigcap_{i\in I}A_i$,
if the collection of convex cones
\begin{center}
 $\big\{{\rm cone}(A_i-x): i\in
I\big\}$
\end{center}
has the normal property. We say that  $\{A_i: i\in I\}$ has the {\it
normal CHIP}, if it has the normal CHIP for each $x\in\bigcap_{i\in
I}A_i$.

(iv) $\{A_i: i\in I\}$ is said to have the {\it weak normal conical
hull intersection property (weak normal CHIP)} at $x\in\bigcap_{i\in
I}A_i$, if the collection of convex cones
\begin{center}
 $\big\{{\rm cone}(A_i-x): i\in
I\big\}$
\end{center}
has the weak normal property. We say that  $\{A_i: i\in I\}$ has the
{\it weak normal CHIP}, if it has the weak normal CHIP for each
$x\in\bigcap_{i\in I}A_i$.\\

\noindent{\bf Remark 5.1.} (a) It is easy to verify from the definition that
\begin{center}
$\{A_i: i\in I\}$ has the CHIP at $x\in\bigcap\limits_{i\in I}A_i$\\
$\Longleftrightarrow$ $T\Big(\bigcap\limits_{i\in I}A_i,
x\Big)=\bigcap\limits_{i\in I} T(A_i, x)$\ \ \ \ \ \ \ \ \ \ \ \ \ \
\ \ \ \\
$\Longleftrightarrow$ $N\Big(\bigcap\limits_{i\in I}A_i,
x\Big)=\overline{\sum\limits_{i\in I}N(A_i, x)}^{w^*}$,\ \ \ \ \ \ \
\ \ \ \ \ \ \
\end{center}
and $\{A_i: i\in I\}$ has the strong CHIP at $x$ if and only if
$\{A_i: i\in I\}$ has the CHIP at $x$ and $\sum_{i\in I}N(A_i, x)$
is weak$^*$-closed.

(b) In the case when $I$ is finite and $X$ is a Hilbert space,
Bakan, Deutsch and Li [3] gave the definition of CHIP through the
closed intersection property rather than by \eqref{5.1}; that is
$\{A_i: i\in I\}$ is said to have the CHIP at $x\in\bigcap_{i\in
I}A_i$, if the collection of convex cones
\begin{center}
 $\big\{{\rm cone}(A_i-x): i\in
I\big\}$
\end{center}
has the closed intersection property, i.e., if
\begin{equation}\label{5.3}
\overline{\bigcap_{i\in I}{\rm cone}(A_i-x)}=\bigcap_{i\in
I}\overline{\rm cone}(A_i-x).
\end{equation}
Obviously the implication \eqref{5.1}$\Rightarrow$\eqref{5.3} holds
trivially and for this case where $I$ is finite and $X$ is a Hilbert
space \eqref{5.1} is equivalent to \eqref{5.3} since the union can
commute with the intersection in this case (see [3, Lemma 2.2] for
details). However, if the index set $I$ is infinite, \eqref{5.3}
would not imply \eqref{5.1} necessarily even in the
finite-dimensional space. For example, let $X:=\mathbb{R}$,
$I:=\mathbb{N}$ and $A_i:=(-\frac{1}{i},\frac{1}{i})$ for each $i\in
I$. Then $\{A_i: i\in I\}$ is a collection of convex sets in $X$ and
$\bigcap_{i\in I}A_i=\{0\}$. By the computation, one has
$$
{\rm cone}\Big(\bigcap_{i\in I}A_i-0\Big)=\{0\}\ \ {\rm and}\ \ {\rm
cone}(A_i-0)=X\ \ \forall i\in I.
$$
This implies that \eqref{5.3} holds but \eqref{5.1} does not. This
example also shows that \eqref{5.3} fails to characterize the
original notion of CHIP for the case when $I$ becomes an infinite
index set although it was used to define CHIP for finite convex sets
in a Hilbert space.

\begin{pro}
Let $\{A_i: i\in I\}$ be a collection of convex sets in $X$ with the
nonempty intersection. If $\{A_i: i\in I\}$ has the strong CHIP,
then $\{A_i: i\in I\}$ has the weak normal CHIP.
\end{pro}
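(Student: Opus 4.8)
The plan is to fix an arbitrary $x\in\bigcap_{i\in I}A_i$, set $K_i:={\rm cone}(A_i-x)$, and check that the family $\{K_i:i\in I\}$ has the weak normal property of Definition~3.1(iii); since $x$ is arbitrary this is exactly the weak normal CHIP. So I fix $x^*\in X^*$ and look for $\eta_{x^*}>0$ with $\bigcap_{i\in I}(K_i+\eta_{x^*}B_X)\subset\overline{(\bigcap_{i\in I}K_i)+\{x^*\}^{\circ}}$.

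The first step is to simplify the right-hand side. Since $\bigcap_{i\in I}K_i$ is a convex cone through $0$ and $\{x^*\}^{\circ}$ is a closed half-space (or $X$ when $x^*=0$), there is a clean dichotomy. If $x^*\notin(\bigcap_{i\in I}K_i)^{\ominus}$, pick $k_0\in\bigcap_{i\in I}K_i$ with $\langle x^*,k_0\rangle>0$; then for every $p\in X$ one has $p-tk_0\in\{x^*\}^{\circ}$ and $tk_0\in\bigcap_{i\in I}K_i$ for $t$ large, so $(\bigcap_{i\in I}K_i)+\{x^*\}^{\circ}=X$ and the inclusion holds with, say, $\eta_{x^*}=1$. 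If $x^*\in(\bigcap_{i\in I}K_i)^{\ominus}$, then $\langle x^*,k+h\rangle\le1$ for all $k\in\bigcap_{i\in I}K_i$ and $h\in\{x^*\}^{\circ}$, so $(\bigcap_{i\in I}K_i)+\{x^*\}^{\circ}=\{x^*\}^{\circ}$ (the inclusion $\supset$ only uses $0\in\bigcap_{i\in I}K_i$), a closed set; hence the target reduces to the scalar claim that $\langle x^*,y\rangle\le1$ for every $y\in\bigcap_{i\in I}(K_i+\eta_{x^*}B_X)$.

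The second step brings in the hypothesis to decompose $x^*$. From $\bigcap_{i\in I}A_i-x\subset\bigcap_{i\in I}K_i$ one gets $(\bigcap_{i\in I}K_i)^{\ominus}\subset(\bigcap_{i\in I}A_i-x)^{\ominus}$, and the strong CHIP at $x$ identifies the latter with $\sum_{i\in I}(A_i-x)^{\ominus}$. So the $x^*$ at hand can be written $x^*=\sum_{i\in I_0}x_i^*$ with $I_0\subset I$ finite and $x_i^*\in(A_i-x)^{\ominus}=({\rm cone}(A_i-x))^{\ominus}=K_i^{\ominus}$. I would then take $\eta_{x^*}:=(1+\sum_{i\in I_0}\|x_i^*\|)^{-1}>0$, and for $y\in\bigcap_{i\in I}(K_i+\eta_{x^*}B_X)$ write $y=k_i+\eta_{x^*}b_i$ with $k_i\in K_i$, $\|b_i\|\le1$ for each $i\in I_0$; since $\langle x_i^*,k_i\rangle\le0$ this yields $\langle x_i^*,y\rangle\le\eta_{x^*}\|x_i^*\|$, and summing over $i\in I_0$ gives $\langle x^*,y\rangle\le\eta_{x^*}\sum_{i\in I_0}\|x_i^*\|<1$, which is precisely what was needed.

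I expect the only genuinely non-routine point to be the first step: realizing that, after closure, $(\bigcap_{i\in I}K_i)+\{x^*\}^{\circ}$ is either $X$ or the half-space $\{x^*\}^{\circ}$, so the weak normal inclusion collapses to one linear inequality on $\bigcap_{i\in I}(K_i+\eta_{x^*}B_X)$. Everything after that — moving between $(\bigcap_{i\in I}K_i)^{\ominus}$, $(\bigcap_{i\in I}A_i-x)^{\ominus}$ and $\sum_{i\in I}(A_i-x)^{\ominus}$, and the final estimate — is routine bookkeeping. Note this direct route uses neither Theorem~4.1 nor the closed intersection property of $\{K_i:i\in I\}$; alternatively one could deduce the proposition from the dual normality and extended Jamenson characterizations of Section~4, but that would be a longer path.
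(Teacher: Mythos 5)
Your proof is correct, but it takes a genuinely different route from the paper's. The paper first notes that strong CHIP implies CHIP, hence the closed intersection property of the cones $K_i:={\rm cone}(A_i-x)$, and then invokes the dual characterization of the weak normal property (Theorem 4.1): using Proposition 4.1(a),(b), the required inclusion becomes $[0,x^*]\cap N(A,x)\subset\overline{\rm co}^{w^*}\big(\bigcup_{i\in I}(N(A_i,x)\cap\eta B_{X^*})\big)$, which is then verified by writing $x^*\in N(A,x)=\sum_{i\in I}N(A_i,x)$ as a finite sum and rescaling it into a convex combination. You instead stay entirely in the primal space: your observation that $\overline{\big(\bigcap_{i\in I}K_i\big)+\{x^*\}^{\circ}}$ is either all of $X$ or exactly the half-space $\{x^*\}^{\circ}$, according as $x^*\notin\big(\bigcap_{i\in I}K_i\big)^{\ominus}$ or $x^*\in\big(\bigcap_{i\in I}K_i\big)^{\ominus}$, is the primal counterpart of the paper's inverse-sum computation, and it collapses the weak normal inclusion to the single inequality $\langle x^*,y\rangle\le 1$ on $\bigcap_{i\in I}(K_i+\eta_{x^*}B_X)$, which your choice $\eta_{x^*}=\big(1+\sum_{i\in I_0}\|x_i^*\|\big)^{-1}$ delivers directly from the finite decomposition $x^*=\sum_{i\in I_0}x_i^*$ supplied by strong CHIP. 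Both arguments ultimately rest on that same decomposition; what yours buys is economy --- no bipolar or weak$^*$-closed-convex-hull machinery, no appeal to Theorem 4.1, and in particular no need to first establish the closed intersection property of the cones --- while the paper's route has the advantage of fitting the proposition into the uniform dual framework of Section 4 that it reuses for Theorems 5.1 and 5.4.
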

\begin{proof}
Denote $A:=\bigcap_{i\in I}A_i$ and let $x\in \bigcap_{i\in I}A_i$.
We prove that $\{{\rm con}(A_i-x): i\in i\}$ has the weak normal
property. Since $\{A_i: i\in I\}$ has the strong CHIP at $x$, it
follows that $\{A_i: i\in I\}$ has the CHIP at $x$ and consequently
\eqref{5.3} holds. Thus, by Theorem 4.1, we only need to prove that
for any $x^*\in X^*$ there exists $\eta:=\eta({x^*})>0$ such that
\begin{equation}\label{5.4}
[0, x^*]\#\overline{\rm co}^{w^*}\Big(\bigcup\limits_{i\in I}({\rm
cone}(A_i-x))^{\circ}\Big)\subset \overline{\rm
co}^{w^*}\Big(\bigcup_{i\in I}\big(({\rm
cone}(A_i-x))^{\circ}\#{\eta}B_{X^*}\big)\Big).
\end{equation}
From the strong CHIP, one has
\begin{equation}\label{5.5}
N(A, x)=\sum_{i\in I}N(A_i, x).
\end{equation}
By Proposition 4.1(a) and \eqref{5.5}, we have that \eqref{5.4} is
equivalent to
\begin{equation}\label{5.6}
[0, x^*]\cap N(A, x)\subset \overline{\rm
co}^{w^*}\Big(\bigcup_{i\in I}\big(N(A_i,
x)\cap{\eta}B_{X^*}\big)\Big),
\end{equation}

Suppose that $x^*\in N(A, x)$. By \eqref{5.5}, there exist a finite subset
$I'\subset I$ and $x_i^*\in N(A_i, x)$ $(\forall i\in I')$ such that
$x^*=\sum_{i\in I'}x_i^*$. Take $\eta\in (0, +\infty)$ such that
$|I'|\cdot\sum_{i\in I'}\|x^*_i\|\leq \eta$ and let
$$
\mu_i:=\frac{1}{|I'|}\ \ {\rm and}\ \ \tilde{x}^*_i:=|I'|\cdot
x^*_i\ \ \forall i\in I'
$$
where $|I'|$ denotes the cardinality of the finite set $I'$. Then
$\sum_{i\in I'}{\mu_i}=1$, $\tilde{x}^*_{i}\in N(A_i, x)\cap \eta
B_{X^*}$ and $x^*=\sum_{i\in I'}\mu_i\tilde{x}^*_{i}\in{\rm
co}\big(\bigcup_{i\in I}(N(A_i, x)\cap{\eta}B_{X^*})\big)$. This
means that \eqref{5.6} holds with $\eta>0$. If $x^*\not\in N(A, x)$,
\eqref{5.6} holds with any $\eta>0$. Therefore, $\{{\rm cone}(A_i-x):
i\in I\}$ has the weak normal property and consequently the weak
normal CHIP for $\{A_i: i\in I\}$ holds. The proof is completed.
\end{proof}

Based on Theorem 4.3, we have the following theorem on the
characterization of normal CHIP in terms of the extended Jamenson
property.

\begin{them}
Let $\{A_i: i\in I\}$ be a collection of convex sets in $X$ with the
nonempty intersection. Suppose that $\{A_i: i\in I\}$ has the CHIP.
Then the following statements are equivalent:

$\rm (i)$ $\{A_i: i\in I\}$ has the normal CHIP;

$\rm (ii)$ for any $x\in\bigcap_{i\in I}A_i$ there exists $\eta>0$
such that $\{N(A_i, x): i\in I\}$ has property $(G_{\eta})$;

$\rm (iii)$ $\{T(A_i, x): i\in I\}$ has the normal property for each
$x\in\bigcap_{i\in I}A_i$ with the same constant.
\end{them}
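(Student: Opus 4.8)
The plan is to prove the equivalences by exploiting the fact that, since $\{A_i:i\in I\}$ has the CHIP, at each $x\in\bigcap_{i\in I}A_i$ the cones $K_i^x:={\rm cone}(A_i-x)$ satisfy the closed intersection property \eqref{5.3}; indeed \eqref{5.1} gives $\overline{\bigcap_i K_i^x}=\overline{\rm cone}(\bigcap_i A_i-x)=\bigcap_i\overline{\rm cone}(A_i-x)=\bigcap_i\overline{K_i^x}$, using Lemma 2.2 to absorb the closure inside the first intersection. Having this, the implication $\mathrm{(i)}\Leftrightarrow\mathrm{(ii)}$ is then a direct application of Theorem 4.3 to the collection $\{K_i^x:i\in I\}$ for each fixed $x$: by definition $\{A_i:i\in I\}$ has the normal CHIP at $x$ iff $\{K_i^x:i\in I\}$ has the normal property, which by Theorem 4.3 (since the closed intersection property already holds) is equivalent to the existence of $\eta>0$ with $\{(K_i^x)^{\circ}:i\in I\}$ having property $(G_\eta)$; and $(K_i^x)^{\circ}={\rm cone}(A_i-x)^{\circ}={\rm cone}(A_i-x)^{\ominus}=N(A_i,x)$ by \eqref{2.6} (or directly from the definitions of dual cone and normal cone). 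Quantifying over all $x\in\bigcap_{i\in I}A_i$ yields $\mathrm{(i)}\Leftrightarrow\mathrm{(ii)}$.

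For $\mathrm{(i)}\Leftrightarrow\mathrm{(iii)}$ I would fix $x\in\bigcap_{i\in I}A_i$ and compare the two cone systems $\{K_i^x:i\in I\}$ and $\{T(A_i,x):i\in I\}=\{\overline{K_i^x}:i\in I\}$. By Proposition 3.1(iii), $\{K_i^x:i\in I\}$ has the normal property if and only if it has the closed intersection property and $\{\overline{K_i^x}:i\in I\}=\{T(A_i,x):i\in I\}$ has the normal property. Since the CHIP at $x$ already supplies the closed intersection property for $\{K_i^x:i\in I\}$, this reduces the normal CHIP at $x$ to the normal property of $\{T(A_i,x):i\in I\}$. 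Running this over all $x$ gives the equivalence of (i) with "$\{T(A_i,x):i\in I\}$ has the normal property for each $x\in\bigcap_{i\in I}A_i$", and it remains only to address the phrase \emph{with the same constant}: a priori Proposition 3.1(iii) allows the normality constant for $\{T(A_i,x):i\in I\}$ to depend on $x$, whereas (iii) demands a single $\eta>0$ working for all $x$ simultaneously. To get this one should trace the constants through Theorem 4.3 and Corollary 4.2, which give $\lambda_N(T(A_i,x):i\in I)=\lambda_N(K_i^x:i\in I)$ (the cone/closure passage in Proposition 3.1(iii) only loses a factor $\tfrac12$, and in fact for cones it is an equality once the closed intersection property holds), so that a uniform lower bound on the normal-CHIP constants across all $x$ is the same as a uniform lower bound on $\lambda_N(T(A_i,x):i\in I)$.

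The main obstacle is precisely this uniformity in the constant in $\mathrm{(iii)}$: the straightforward pointwise argument only produces, for each $x$, \emph{some} $\eta=\eta(x)>0$, and one must argue that the definition of "normal CHIP" — which in Definition 5.1(iii) is itself only a pointwise statement — already forces, or should be read as forcing, a common constant; I would handle this either by showing $\lambda_N(K_i^x:i\in I)=\lambda_N(T(A_i,x):i\in I)$ exactly (so that "normal CHIP at every $x$" with whatever pointwise constants is literally the same data as "$\{T(A_i,x):i\in I\}$ has the normal property at every $x$" with the same pointwise constants, and the inf over $x$ of these constants is the uniform constant in (iii) when it is positive), or, if the intended reading of normal CHIP carries a uniform constant, by observing that the equivalences above preserve it step by step. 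Everything else — the identifications $(K_i^x)^{\circ}=N(A_i,x)$, $\overline{K_i^x}=T(A_i,x)$, and the reduction to Theorem 4.3 — is routine once the closed intersection property of $\{K_i^x:i\in I\}$ has been extracted from the CHIP hypothesis via Lemma 2.2.
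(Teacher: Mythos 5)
Your proposal is correct and follows essentially the same strategy as the paper: extract the closed intersection property of the cones $\{{\rm cone}(A_i-x): i\in I\}$ from the CHIP hypothesis (Remark 5.1(b)), then invoke Theorem 4.3 together with the identification $({\rm cone}(A_i-x))^{\circ}=(A_i-x)^{\ominus}=N(A_i,x)$ to obtain (i)$\Leftrightarrow$(ii). The one place you diverge is the third statement: you pass from (i) to (iii) via Proposition 3.1(iii) (normal property of a cone system versus that of its closures), whereas the paper goes from (ii) to (iii) by applying Theorem 4.3 a second time, now to the closed cones $T(A_i,x)$ --- for these the closed intersection property is automatic and $T(A_i,x)^{\circ}=N(A_i,x)$, so the equivalence with property $(G_{\eta})$ drops out at once and with the same $\eta$. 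The paper's route is slightly cleaner on the constants, since your appeal to Proposition 3.1(iii) nominally costs a factor of $2$ that you then have to argue away via Corollary 4.2. Your worry about the phrase ``with the same constant'' is well taken: the paper's proof is purely pointwise in $x$ and does not (and, since normal CHIP is itself a pointwise notion, cannot) produce a single $\eta$ valid for all $x\in\bigcap_{i\in I}A_i$; the phrase has to be read as saying that at each fixed $x$ the constant in (iii) may be taken equal to the $\eta$ of (ii), which is exactly what $\lambda_N(T(A_i,x):i\in I)=\lambda_G(N(A_i,x):i\in I)$ from Corollary 4.2 delivers. Your handling of this point is more explicit than the paper's, which simply does not comment on it; compare Theorem 5.5, where a genuinely uniform constant over $x$ is needed and is obtained from the additional linear regularity hypothesis rather than from normal CHIP alone.
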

\begin{proof}
Let $x\in \bigcap_{i\in I}A_i$. By the CHIP and Remark 5.1(b), one
has \eqref{5.3} holds, i.e., $\{{\rm cone}(A_i-x): i\in I\}$ has the
closed intersection property. Then Theorem 4.3 implies that $\{{\rm
cone}(A_i-x): i\in I\}$ has the normal property if and only if there
exists $\eta>0$ such that $\{({\rm cone}(A_i-x))^{\circ}: i\in I\}$
has property $(G_{\eta})$. Hence (i)$\Leftrightarrow$(ii) holds
since $({\rm cone}(A_i-x))^{\circ}=(A_i-x)^{\ominus}=N(A_i, x)$.
Noting that each $T(A_i, x)$ is a closed convex cone and $N(A_i,
x)=(T(A_i, x))^{\circ}$, it follows from Theorem 4.3 that (ii) is
equivalent to (iii). The proof is completed.
\end{proof}

In the next subsection, we mainly study linear regularity of
arbitrarily many convex sets in a Banach space. The notion of linear
regularity was used by Bauschke and Borwein [5] to establish a linear convergence rate of iterates generated by the
cyclic projection algorithm for finding the projection from a point
to an intersection of finitely many closed convex sets. This concept
has been studied and applied in optimization and mathematical programming (cf. [16,
17, 19, 27, 28, 29, 30] and references therein). The main purpose of this
subsection is to provide the equivalent interrelationship among linear
regularity, the normal CHIP and the normal property. We begin with
the concept of the linear regularity for arbitrarily many convex
sets in a Banach space.\\

\noindent{\bf Definition 5.2.} Let $\{A_i: i\in I\}$ be a collection of
convex sets in $X$ such that $A:=\bigcap_{i\in I}A_i$ is nonempty.

(i) $\{A_i: i\in I\}$ is said to have the {\it linear regularity
property}, if there exists $\gamma>0$ such that
\begin{equation}\label{5.7}
d(x, A)\leq\gamma\sup_{i\in I}d(x, A_i)\ \ \forall x\in X.
\end{equation}

(ii) $\{A_i: i\in I\}$ is said to have the {\it bounded linear
regularity property}, if for any $\rho>0$ there exists
$\gamma_{\rho}>0$ such that
\begin{equation}\label{5.8}
d(x, A)\leq\gamma_{\rho}\sup_{i\in I}d(x, A_i)\ \ \forall x\in \rho
B_X.
\end{equation}

The following theorem is the equivalence between linear regularity
and the uniform normal property.

\begin{them}
$\{A_i: i\in I\}$ be a collection of convex sets in $X$ such that
$A:=\bigcap_{i\in I}A_i$ is nonempty. Then $\{A_i: i\in I\}$ has
the  linear regularity property if and only if it has the uniform
normal property.
\end{them}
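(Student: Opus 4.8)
The plan is to prove the two implications separately, translating between the set-inclusion form of the uniform normal property \eqref{3.4} and the metric form of linear regularity \eqref{5.7}, with the expected quantitative dictionary $\gamma = 1/\eta$. The one elementary fact I would use throughout is that, since $B_X$ is the \emph{closed} unit ball while the infimum defining $d(\cdot,A_i)$ need not be attained, one only has the inclusions $\{x:d(x,A_i)<r\}\subset A_i+rB_X\subset\{x:d(x,A_i)\le r\}$ for $r>0$; this is what dictates where strict inequalities must appear. Note also that $d(x,A)<\delta$ always forces $x\in A+\delta B_X$ (a witness $a\in A$ with $\|x-a\|<\delta$ exists directly from the infimum over $A$), irrespective of whether $A$ is closed.

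For the implication "uniform normal property $\Rightarrow$ linear regularity", I would assume \eqref{3.4} holds with constant $\eta>0$, fix $x\in X$, and set $r:=\sup_{i\in I}d(x,A_i)$ (the case $r=+\infty$ makes \eqref{5.7} vacuous, so assume $r<+\infty$). For every $\delta>r/\eta$ one has $d(x,A_i)\le r<\eta\delta$ for each $i$, hence $x\in A_i+\eta\delta B_X$ for all $i$, so $x\in\bigcap_{i\in I}(A_i+\eta\delta B_X)\subset A+\delta B_X$ by \eqref{3.4}, and therefore $d(x,A)\le\delta$. Letting $\delta\downarrow r/\eta$ yields $d(x,A)\le\tfrac1\eta\sup_{i\in I}d(x,A_i)$, i.e. \eqref{5.7} with $\gamma:=1/\eta$.

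For the converse I would assume \eqref{5.7} holds with constant $\gamma>0$ and pick any $\eta\in(0,1/\gamma)$. Given $\delta>0$ and $x\in\bigcap_{i\in I}(A_i+\eta\delta B_X)$, each membership gives $d(x,A_i)\le\eta\delta$, so $\sup_{i\in I}d(x,A_i)\le\eta\delta$ and thus $d(x,A)\le\gamma\eta\delta<\delta$ because $\gamma\eta<1$; hence $x\in A+\delta B_X$. Since $\delta>0$ was arbitrary this is exactly \eqref{3.4}, so $\{A_i:i\in I\}$ has the uniform normal property.

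There is no genuine obstacle here: convexity, completeness, and all the earlier machinery (polarity, inverse sums, the Jamenson property, CHIP) play no role, and the whole argument is metric bookkeeping. The only point demanding attention — and the only place the proof could go wrong if handled carelessly — is the closed-ball/open-ball gap noted above, which is why I would take $\eta$ strictly below $1/\gamma$ in the second implication and pass to the limit $\delta\downarrow r/\eta$ rather than set $\delta=r/\eta$ outright in the first.
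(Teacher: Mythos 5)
Your proof is correct and follows essentially the same route as the paper's: in both directions it is the same metric bookkeeping with the dictionary $\gamma=1/\eta$, taking $\eta<1/\gamma$ in the converse and passing to the limit (the paper uses a $(1+\varepsilon)$ slack where you let $\delta\downarrow r/\eta$) to handle the closed-ball versus strict-infimum gap. No substantive difference.
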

\begin{proof}
{\bf The sufficiency part}. By the uniform normal property, there
exists $\eta>0$ such that
\begin{equation}\label{5.9}
\bigcap_{i\in I}(A_i+\delta\eta B_X)\subset A+\delta B_X\ \ \forall
\delta>0.
\end{equation}
Let $x\in X\backslash A$. Then $\delta_0:=\sup_{i\in I}d(x, A_i)\in
(0, +\infty)$ and consequently
\begin{equation}\label{5.10}
x\in \overline{A_i+\delta_0B_X}\ \ \forall i\in I.
\end{equation}
Take arbitrary $\varepsilon>0$. By \eqref{5.9} and \eqref{5.10}, we
obtain
$$
x\in \bigcap_{i\in I}(\overline{A_i+\delta_0B_X})\subset
\bigcap_{i\in I}\big(A_i+(1+\varepsilon)\delta_0B_X\big)\subset
A+\frac{1+\varepsilon}{\eta}\delta_0 B_X.
$$
This implies that
$$
d(x,
A)\leq\frac{1+\varepsilon}{\eta}\delta_0=\frac{1+\varepsilon}{\eta}\sup_{i\in
I}d(x, A_i).
$$
Thus the linear regularity for $\{A_i: i\in I\}$ holds with
$\gamma:=\frac{1+\varepsilon}{\eta}$.

{\bf The necessity part}. By the linear regularity, there exists
$\gamma>0$ such that \eqref{5.7} holds. Take arbitrary $\eta:=(0,
1/\gamma)$ and let $\delta>0$. Then for any $x\in \bigcap_{i\in
I}(A_i+\delta \eta B_X)$, one has $\sup_{i\in I}d(x, A_i)\leq
\delta\eta$ and it follows from \eqref{5.7} that
$$
d(x, A)\leq\gamma\sup_{i\in I}d(x, A_i)\leq\gamma\eta\delta.
$$
From this, we get
$$
x\in \overline{A+ \gamma\eta\delta B_X}\subset A+\delta B_X
$$
where the inclusion follows as $\gamma\eta<1$. Hence $\{A_i: i\in
I\}$ has the uniform normal property with $\eta>0$. The proof is
completed.
\end{proof}

\noindent{\bf Remark 5.2.} We define
$$
\lambda_{UN}(A_i:i\in I):=\sup\{\eta\geq 0:
\eqref{5.9}\ {\rm holds}\ {\rm with} \ \eta\geq 0 \}
$$
and
$$
\gamma(A_i:i\in I):=\inf\{\gamma>0: \eqref{5.7}\ {\rm holds}\ {\rm
with} \ \gamma> 0  \}.
$$
Here we use the convention that the infimum over the empty set is
$+\infty$. Applying the proof of Theorem 5.2, one can verify that
\begin{equation}\label{5.11}
\gamma(A_i:i\in I)=\frac{1}{\lambda_{UN}(A_i:i\in I)}.
\end{equation}

The following corollary is immediate from Theorem 5.2 which is a
characterization of the bounded linear regularity property.

\begin{coro}
Let $\{A_i: i\in I\}$ be a collection of convex sets in $X$ such that
$A:=\bigcap_{i\in I}A_i$ is nonempty. Then $\{A_i: i\in I\}$ has
the bounded linear regularity property if and only if for any
$\rho>0$ there exists $\eta_{\rho}>0$ such that
\begin{equation}\label{5.12}
\rho B_X\cap\Big(\bigcap_{i\in
I}(A_i+\delta\eta_{\rho}B_X)\Big)\subset A+\delta B_X\ \ \forall
\delta>0.
\end{equation}
\end{coro}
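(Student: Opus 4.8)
The plan is to rerun the proof of Theorem 5.2 inside the ball $\rho B_X$ for each fixed $\rho>0$. The quantifier ``for any $\rho>0$'' occupies the same outermost position in Definition 5.2(ii) and in \eqref{5.12}, so it suffices to show, for a fixed $\rho>0$, that \eqref{5.8} holds with some $\gamma_\rho>0$ if and only if \eqref{5.12} holds with some $\eta_\rho>0$, the two constants being, up to an arbitrarily small factor, reciprocal to one another exactly as in Remark 5.2.

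For the implication \eqref{5.12}$\Rightarrow$\eqref{5.8}, I would fix $\rho>0$, take $\eta_\rho>0$ as in \eqref{5.12}, and pick $x\in\rho B_X\setminus A$ (the case $x\in A$ being trivial). Set $\delta_0:=\sup_{i\in I}d(x,A_i)$; since $A\neq\emptyset$, any $a\in A$ gives $d(x,A_i)\leq\|x-a\|$ for all $i$, so $\delta_0<+\infty$. Then, exactly as in the proof of Theorem 5.2, for each $\varepsilon>0$ one has $x\in\overline{A_i+\delta_0B_X}\subset A_i+(1+\varepsilon)\delta_0B_X$ for every $i$; choosing $\delta:=(1+\varepsilon)\delta_0/\eta_\rho$, so that $A_i+(1+\varepsilon)\delta_0B_X=A_i+\delta\eta_\rho B_X$, and applying \eqref{5.12} with this $\delta$ together with $x\in\rho B_X$, I obtain $x\in A+\delta B_X$, i.e. $d(x,A)\leq\frac{1+\varepsilon}{\eta_\rho}\sup_{i\in I}d(x,A_i)$. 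Taking, say, $\varepsilon=1$ yields \eqref{5.8} with $\gamma_\rho:=2/\eta_\rho$.

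For the converse I would fix $\rho>0$, take $\gamma_\rho>0$ as in \eqref{5.8}, choose any $\eta_\rho\in(0,1/\gamma_\rho)$, fix $\delta>0$, and let $x\in\rho B_X\cap\bigcap_{i\in I}(A_i+\delta\eta_\rho B_X)$. Then $\sup_{i\in I}d(x,A_i)\leq\delta\eta_\rho$, so \eqref{5.8} gives $d(x,A)\leq\gamma_\rho\eta_\rho\delta<\delta$; hence $x\in\overline{A+\gamma_\rho\eta_\rho\delta B_X}\subset A+\delta B_X$ because $\gamma_\rho\eta_\rho<1$. This is precisely \eqref{5.12}.

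I do not expect a genuine obstacle here, since the argument is a localized, verbatim copy of the proof of Theorem 5.2; the only points needing a little care are the arithmetic of the constants ($\gamma_\rho\leftrightarrow1/\eta_\rho$, cf. Remark 5.2) and the harmless degenerate situation $\sup_{i\in I}d(x,A_i)=0$, which one disposes of by noting that then $x\in\bigcap_{i\in I}\overline{A_i}\subset\bigcap_{i\in I}(A_i+\delta\eta_\rho B_X)$ for every $\delta>0$ and letting $\delta\downarrow0$ in \eqref{5.12}.
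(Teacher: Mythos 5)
Your proof is correct and is exactly the localization of the proof of Theorem 5.2 to $\rho B_X$ that the paper intends when it calls the corollary ``immediate from Theorem 5.2''; the constants match Remark 5.2. You are in fact slightly more careful than the paper's Theorem 5.2 argument in handling the degenerate case $\sup_{i\in I}d(x,A_i)=0$ with $x\notin A$ (possible when the $A_i$ are not closed), which the paper glosses over.
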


\begin{them}
Let $\{K_i: i\in I\}$ be a collection of convex cones in $X$. Then the
following statements are equivalent:

$\rm (i)$ $\{K_i: i\in I\}$ has the  linear regularity property;

$\rm (ii)$ $\{K_i: i\in I\}$ has the normal property;

$\rm (iii)$ $\{K_i: i\in I\}$ has the closed intersection property
and there exists $\eta>0$ such that $\{K_i^{\circ}: i\in I\}$ has
property $(G_{\eta})$.
\end{them}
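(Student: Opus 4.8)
The plan is to recognize that this statement is essentially a synthesis of three results already proved in the excerpt: Theorem~5.2 (linear regularity $\Leftrightarrow$ uniform normal property), Proposition~3.1(i) (for convex cones, normal property $\Leftrightarrow$ uniform normal property), and Theorem~4.3 (for convex cones, normal property $\Leftrightarrow$ closed intersection property together with property $(G_\eta)$ of the polar cones). No new technical machinery is needed; the work has already been absorbed into those statements.

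First I would handle the implication chain $\mathrm{(i)}\Leftrightarrow\mathrm{(ii)}$. Since each $K_i$ is a convex cone we have $0\in\bigcap_{i\in I}K_i$, so $A:=\bigcap_{i\in I}K_i$ is nonempty and Theorem~5.2 applies to the collection $\{K_i:i\in I\}$; it gives that $\{K_i:i\in I\}$ has the linear regularity property if and only if it has the uniform normal property. By Proposition~3.1(i), for a collection of convex cones the uniform normal property is equivalent to the normal property. Concatenating these two equivalences yields $\mathrm{(i)}\Leftrightarrow\mathrm{(ii)}$.

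Next, $\mathrm{(ii)}\Leftrightarrow\mathrm{(iii)}$ is exactly the statement of Theorem~4.3: $\{K_i:i\in I\}$ has the normal property if and only if it has the closed intersection property and there exists $\eta>0$ such that $\{K_i^{\circ}:i\in I\}$ has property $(G_\eta)$. Combining the two equivalences gives the asserted equivalence of $\mathrm{(i)}$, $\mathrm{(ii)}$ and $\mathrm{(iii)}$, which completes the proof.

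I do not anticipate a genuine obstacle here. The only points requiring a moment of care are bookkeeping ones: checking that the hypotheses of the cited results are met, namely that $\bigcap_{i\in I}K_i\neq\emptyset$ so that Theorem~5.2 is applicable, and that the cone structure of the $K_i$ (needed for both Proposition~3.1(i) and Theorem~4.3) is present by assumption. All the substantive analysis---the polar and closure manipulations, the inverse-sum identities of Proposition~4.1, and the separation argument underlying the closed intersection property---is already contained in the proofs of Theorems~4.1--4.3, Theorem~5.2 and Proposition~3.1.
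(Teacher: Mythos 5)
Your proposal is correct and coincides with the paper's own argument: the paper likewise derives Theorem~5.3 by combining Proposition~3.1(i), Theorem~4.3 and Theorem~5.2 in exactly this way. The bookkeeping points you flag (nonemptiness of $\bigcap_{i\in I}K_i$ via $0\in K_i$, and the cone hypothesis) are indeed all that needs checking.
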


Applying Proposition 3.1(i), Theorems 4.3 and 5.2, one can
easily obtain the proof of Theorem 5.3.

The following theorem, as one main result in this paper, establishes
the equivalence among different concepts of the normal property, the
extended Jamenson property, the normal CHIP and the linear
regularity property in some sense.

\begin{them}
Let $\{A_i: i\in I\}$ be a collection of convex sets in $X$ with the
nonempty intersection. Suppose that $\{A_i: i\in I\}$ has the CHIP .
Then the following statements are equivalent:

$\rm (i)$ $\{A_i: i\in I\}$ has the normal CHIP;

$\rm (ii)$ $\{T(A_i, x): i\in I\}$ has the normal property for any
$x\in A:=\bigcap_{i\in I}A_i$;

$\rm (iii)$ for any $x\in\bigcap_{i\in I}A_i$ there exists $\eta>0$
such that $\{N(A_i, x): i\in I\}$ has property $(G_{\eta})$;

$\rm (iv)$ $\{T(A_i, x): i\in I\}$ has the linear regularity
property for any $x\in\bigcap_{i\in I}A_i$.
\end{them}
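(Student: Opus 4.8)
The plan is to localize the statement at each base point $x\in A:=\bigcap_{i\in I}A_i$ and then read off the conclusion from the results already established for collections of convex cones. Fix $x\in A$. Recall that $T(A_i,x)=\overline{\rm cone}(A_i-x)$ is a closed convex cone, that the polarity relations between normal and tangent cones recorded in Section~2 give $N(A_i,x)=(A_i-x)^{\ominus}=({\rm cone}(A_i-x))^{\circ}=(T(A_i,x))^{\circ}$, and that, by Definition~5.1(iii), $\{A_i:i\in I\}$ has the normal CHIP at $x$ precisely when $\{{\rm cone}(A_i-x):i\in I\}$ has the normal property. Hence it suffices to prove, for each fixed $x\in A$, the equivalence of the four statements: (a) $\{{\rm cone}(A_i-x):i\in I\}$ has the normal property; (b) $\{T(A_i,x):i\in I\}$ has the normal property; (c) there is $\eta>0$ such that $\{N(A_i,x):i\in I\}$ has property $(G_\eta)$; (d) $\{T(A_i,x):i\in I\}$ has the linear regularity property.

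For (a)$\Leftrightarrow$(b): since $\{A_i:i\in I\}$ has the CHIP, Remark~5.1(b) (the trivial implication \eqref{5.1}$\Rightarrow$\eqref{5.3}) shows that $\{{\rm cone}(A_i-x):i\in I\}$ has the closed intersection property, so Proposition~3.1(iii) applies and gives that this family has the normal property if and only if $\{\overline{\rm cone}(A_i-x):i\in I\}=\{T(A_i,x):i\in I\}$ does. For the remaining equivalences, observe that each $T(A_i,x)$ is closed, so $\{T(A_i,x):i\in I\}$ automatically has the closed intersection property (an intersection of closed sets being closed). Then Theorem~5.3, applied to the cone family $\{T(A_i,x):i\in I\}$ and combined with the identity $(T(A_i,x))^{\circ}=N(A_i,x)$, shows at once that (b), (c) and (d) are equivalent. (Alternatively, the pointwise equivalence (a)$\Leftrightarrow$(c) is exactly what is established inside the proof of Theorem~5.1, whose CHIP hypothesis is in force here because \eqref{5.3} holds.) This settles the claim at the point $x$.

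Each of (i)--(iv) in the theorem asserts precisely that the corresponding one of (a)--(d) holds at every $x\in A$, so quantifying the above pointwise equivalences over all $x\in A$ finishes the proof; note that no uniformity of the constants across different base points $x$ is demanded, which is why the argument amounts to a pure assembly of Proposition~3.1, Theorem~5.1 and Theorem~5.3. I do not expect a genuine obstacle: the only care needed is bookkeeping --- keeping track of whether a given lemma is applied to the conical hulls ${\rm cone}(A_i-x)$ or to their closures $T(A_i,x)$, and verifying the ``closed intersection property'' hypotheses of Proposition~3.1(iii) and Theorem~5.3 (the first supplied by the CHIP together with Remark~5.1(b), the second automatic since tangent cones are closed). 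The identity $N(A_i,x)=({\rm cone}(A_i-x))^{\circ}=(T(A_i,x))^{\circ}$ is precisely what lets statement (iii) be read as a dual statement about either family.
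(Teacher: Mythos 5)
Your proof is correct, and it assembles the paper's cone-level results in a different (and arguably cleaner) way than the paper does. The paper obtains (i)$\Leftrightarrow$(ii)$\Leftrightarrow$(iii) by citing Theorem~5.1, whose proof applies the dual characterization of Theorem~4.3 twice --- once to $\{{\rm cone}(A_i-x)\}$ and once to $\{T(A_i,x)\}$, linked through the common polar $N(A_i,x)$ --- and then handles (i)$\Leftrightarrow$(iv) separately by applying Theorem~5.3 to the \emph{non-closed} cones ${\rm cone}(A_i-x)$ and translating the resulting linear-regularity inequality to the tangent cones via the distance identities $d(z,{\rm cone}(A_i-x))=d(z,T(A_i,x))$ and $d\bigl(z,\bigcap_i{\rm cone}(A_i-x)\bigr)=d\bigl(z,\bigcap_i T(A_i,x)\bigr)$, the latter requiring the closed intersection property \eqref{5.3} supplied by the CHIP. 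You instead use Proposition~3.1(iii) as the single bridge between the open and closed cone families (with the CHIP again supplying the closed intersection property needed there), and then apply Theorem~5.3 once to the closed family $\{T(A_i,x)\}$, for which the closed intersection property is automatic; this yields (b)$\Leftrightarrow$(c)$\Leftrightarrow$(d) in one stroke and dispenses with the distance computation entirely. Your verification of the hypotheses is in order ($0\in\bigcap_i{\rm cone}(A_i-x)$, $(T(A_i,x))^{\circ}=N(A_i,x)$, CHIP $\Rightarrow$ \eqref{5.3}), and your purely pointwise formulation actually matches the statement of Theorem~5.4(ii) more faithfully than a verbatim citation of Theorem~5.1(iii), whose ``same constant'' qualifier does not appear in Theorem~5.4. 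Nothing is missing.
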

\begin{proof}
The equivalence (i)$\Leftrightarrow$(ii)$\Leftrightarrow$(iii)
follows from Theorem 5.1. It remains to prove the equivalence
between (i) and (iv).

Let $x\in\bigcap_{i\in I}A_i$. By the normal CHIP, we have
\begin{center}
$\{A_i: i\in I\}$ has the normal CHIP at $x$\ \ \ \ \ \ \ \ \\
 $\Longleftrightarrow$ $
\{ {\rm cone}(A_i-x): i\in I\}$ has the normal property.\\
\end{center}
From Theorem 5.3, the normal CHIP of $\{A_i: i\in I\}$ holds at $x$
if and only if there exists $\gamma>0$ such that
\begin{equation}\label{5.13}
d\Big(z, \bigcap_{i\in I}{\rm cone}(A_i-x)\Big)\leq \gamma\sup_{i\in
I}d\big(z, {\rm cone}(A_i-x)\big)\ \ \forall z\in X.
\end{equation}
Since $\{A_i: i\in I\}$ has the CHIP at $x$, by using Remark 5.1(b),
we have
\begin{equation}\label{5.14}
\overline{\bigcap_{i\in I}{\rm cone}(A_i-x)}=\bigcap_{i\in
I}\overline{\rm cone}(A_i-x)).
\end{equation}
Noting that $d\big(z,{\rm cone}(A_i-x)\big)=d\big(z,\overline{\rm
cone}(A_i-x)\big)=d\big(z, T(A_i, x)\big)$ and
\begin{eqnarray*}
&&d\Big(z, \bigcap_{i\in I}{\rm cone}(A_i-x)\Big)=d\Big(z,
\overline{\bigcap_{i\in I}{\rm cone}(A_i-x)}\Big)\\
&=&d\Big(z, \bigcap_{i\in I}\overline{{\rm
cone}}(A_i-x)\Big)=d\Big(z, \bigcap_{i\in I}T(A_i, x)\Big)
\end{eqnarray*}
(where the second equality holds by \eqref{5.14}) for all $z\in
X$, it follows from \eqref{5.13} that
$$
d\Big(z, \bigcap_{i\in I}T(A_i, x)\Big)\leq \gamma\sup_{i\in
I}d\big(z, T(A_i, x)\big)\ \ \forall z\in X.
$$
This implies that $\{T(A_i, x): i\in I\}$ has the linear regularity
property and thus (i)$\Leftrightarrow$(iv) holds. The proof is
completed.
\end{proof}

Recall that Li, Ng and Pong [19] studied the linear regularity for the
infinite system of arbitrarily many closed convex subsets in a
Banach space, and proved the following result on the equivalent
conditions for linear regularity characterized in terms of normal
cones. Readers can refer to [19, Theorem 4.5] for details and the
proof.

\begin{lem}
Let $\gamma>0$ and $\{A_i: i\in I\}$ be a collection of closed
convex sets in $X$ such that $A:=\bigcap_{i\in I}A_i\not=\emptyset$.
Suppose that $I$ is a compact metric space and $i\mapsto A_i$ is
lower
semicontinuous. Then the following statements are equivalent:\\
$\rm (i)$ For all $x\in X$, $d(x, A)\leq \gamma\sup_{i\in I}d(x,
A_i)$;\\
$\rm (ii)$ For all $x\in A$, $N(A, x)\cap
B_{X^*}\subset\overline{\rm co}^{w^*}\big(\bigcup_{i\in I}(N(A_i,
x)\cap \gamma B_{X^*})\big)$.
\end{lem}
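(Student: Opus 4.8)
\noindent{\bf Sketch of proof.} The plan is to prove $\rm(i)\Leftrightarrow\rm(ii)$ by routing both conditions through the localized inequality $d(z,T(A,x))\le\gamma\sup_{i\in I}d(z,T(A_i,x))$, valid for all $z\in X$ and all $x\in A$, and I would first record two elementary facts. First, for a closed convex cone $C\subset X$ and $z\in X$ one has $d(z,C)=\sup\{\langle z^*,z\rangle:z^*\in C^{\ominus}\cap B_{X^*}\}$; since $N(A_i,x)=T(A_i,x)^{\ominus}$ and $N(A,x)=T(A,x)^{\ominus}$, this yields $\gamma\,d(z,T(A_i,x))=\sup\{\langle z^*,z\rangle:z^*\in N(A_i,x)\cap\gamma B_{X^*}\}$ and $d(z,T(A,x))=\sup\{\langle z^*,z\rangle:z^*\in N(A,x)\cap B_{X^*}\}$. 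Second, for $x\in A_i$ the map $t\mapsto t^{-1}d(x+tz,A_i)$ is non-decreasing on $(0,+\infty)$ and decreases to $d(z,T(A_i,x))$ as $t\downarrow0$, because $t^{-1}d(x+tz,A_i)=d(z,\frac1t(A_i-x))$ and, $A_i$ being convex with $x\in A_i$, the sets $\frac1t(A_i-x)$ increase to ${\rm cone}(A_i-x)$ as $t\downarrow0$; the same holds with $A$ in place of $A_i$.

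For $\rm(i)\Rightarrow\rm(ii)$, I would fix $x\in A$, apply the inequality of $\rm(i)$ at the point $x+tz$, divide by $t>0$, and take $\inf_{t>0}$; using the second fact on the left this gives $d(z,T(A,x))\le\gamma\inf_{t>0}\sup_{i}t^{-1}d(x+tz,A_i)$. The crux is the identity $\inf_{t>0}\sup_{i}t^{-1}d(x+tz,A_i)=\sup_{i}d(z,T(A_i,x))$: ``$\ge$'' is immediate, and ``$\le$'' is a Dini-type argument, which is exactly where the hypotheses enter. Indeed, writing $f_t(i):=t^{-1}d(x+tz,A_i)$, each $f_t$ is upper semicontinuous on the compact metric space $I$ (this is precisely lower semicontinuity of $i\mapsto A_i$), $f_t$ is monotone in $t$, and $f_t\downarrow f_0$ pointwise with $f_0(i):=d(z,T(A_i,x))$; hence were $\inf_{t>0}\sup_I f_t=\sup_I f_0+\rho$ with $\rho>0$, one picks $t_n\downarrow0$ and maximizers $i_n$ of $f_{t_n}$, extracts $i_n\to i_*$ by compactness, and then for each fixed $m$ the monotonicity $f_{t_n}(i_n)\le f_{t_m}(i_n)$ (for $n$ large) together with $f_{t_n}(i_n)=\sup_I f_{t_n}\ge\sup_I f_0+\rho$ and upper semicontinuity of $f_{t_m}$ forces $f_{t_m}(i_*)\ge\sup_I f_0+\rho$, so $f_0(i_*)=\lim_m f_{t_m}(i_*)\ge\sup_I f_0+\rho$, a contradiction. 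This establishes the localized inequality. To deduce $\rm(ii)$, if $x^*\in N(A,x)\cap B_{X^*}$ lay outside the weak$^*$-closed convex set $D:=\overline{\rm co}^{w^*}(\bigcup_i(N(A_i,x)\cap\gamma B_{X^*}))$, I would strictly separate $x^*$ from $D$ in $X^*$ equipped with the weak$^*$ topology, which (weak$^*$-continuous functionals being evaluations at points of $X$) gives $z\in X$ with $\langle x^*,z\rangle>\sup_{y^*\in D}\langle y^*,z\rangle$; by the first fact the right-hand side equals $\gamma\sup_i d(z,T(A_i,x))$ while $\langle x^*,z\rangle\le d(z,T(A,x))$, contradicting the localized inequality.

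For $\rm(ii)\Rightarrow\rm(i)$, fix $x_0\in X$ with $r:=d(x_0,A)>0$ (the case $x_0\in A$ being trivial), and put $\delta:=\sup_i d(x_0,A_i)$, which is finite because $A\subset A_i$ forces $d(x_0,A_i)\le r$. For $\varepsilon\in(0,1)$, the set $A$ being closed, hence complete, Ekeland's variational principle applied to $a\mapsto\|a-x_0\|$ on $A$ produces $\bar a\in A$ minimizing $a\mapsto\|a-x_0\|+\varepsilon\|a-\bar a\|$ over $A$; the subdifferential sum rule then gives $n^*\in N(A,\bar a)$ of the form $n^*=-(u^*+\varepsilon v^*)$, where $u^*$ is a norming functional of $\bar a-x_0$ and $\|v^*\|\le1$, so $0<\|n^*\|\le1+\varepsilon$ and $\langle n^*,x_0-\bar a\rangle\ge(1-\varepsilon)\|x_0-\bar a\|\ge(1-\varepsilon)r$. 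Now $\rm(ii)$ at $\bar a$ places $n^*/\|n^*\|$ in $\overline{\rm co}^{w^*}(\bigcup_i(N(A_i,\bar a)\cap\gamma B_{X^*}))$; moreover, for any $y^*\in N(A_i,\bar a)\cap\gamma B_{X^*}$ and any $a_i\in A_i$, $\langle y^*,x_0-\bar a\rangle=\langle y^*,x_0-a_i\rangle+\langle y^*,a_i-\bar a\rangle\le\gamma\|x_0-a_i\|$ (as $\langle y^*,a_i-\bar a\rangle\le0$), whence $\langle y^*,x_0-\bar a\rangle\le\gamma d(x_0,A_i)\le\gamma\delta$; this bound is preserved by convex combinations and by weak$^*$-limits because $x_0-\bar a\in X$, so $\langle n^*,x_0-\bar a\rangle\le\|n^*\|\gamma\delta\le(1+\varepsilon)\gamma\delta$. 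Combining the two estimates, $(1-\varepsilon)r\le(1+\varepsilon)\gamma\delta$; letting $\varepsilon\downarrow0$ gives $d(x_0,A)\le\gamma\sup_i d(x_0,A_i)$, which is $\rm(i)$.

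The step I expect to be the main obstacle is the Dini-type commutation $\inf_{t>0}\sup_i=\sup_i\inf_{t>0}$ in the first implication; it is precisely there that compactness of $I$ and lower semicontinuity of $i\mapsto A_i$ are indispensable — without them even the tangent-cone identity $T(A,x)=\bigcap_i T(A_i,x)$ can fail in $\mathbb{R}$ (cf. Remark 5.1(b)). The only other mildly delicate point, the possible absence of a nearest point to $A$ in a general Banach space, is handled above via Ekeland's variational principle instead of an exact metric projection.
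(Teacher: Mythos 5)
Your proof is correct, but note that the paper itself offers no proof of this lemma: it is quoted from Li, Ng and Pong [19, Theorem 4.5], so there is no internal argument to compare against, and what you have produced is a self-contained derivation of a cited result. Both halves check out. For $\rm(ii)\Rightarrow\rm(i)$ you use Ekeland's variational principle together with the Moreau--Rockafellar sum rule to produce an almost-normal functional $n^*$ at an almost-nearest point $\bar a$ of $A$, and then test $\rm(ii)$ against the direction $x_0-\bar a$; this is the standard dual route to error bounds and, as you correctly observe, it uses neither the compactness of $I$ nor the lower semicontinuity of $i\mapsto A_i$. For $\rm(i)\Rightarrow\rm(ii)$ your reduction to the localized inequality $d(z,T(A,x))\le\gamma\sup_{i}d(z,T(A_i,x))$ rests on three sound ingredients: the monotonicity of $t\mapsto t^{-1}d(x+tz,A_i)=d\big(z,\tfrac1t(A_i-x)\big)$ and its convergence to $d(z,T(A_i,x))$; the Dini-type interchange $\inf_{t>0}\sup_{i}=\sup_{i}\inf_{t>0}$, which is exactly where compactness of $I$ and upper semicontinuity of $i\mapsto d(y,A_i)$ (i.e.\ lower semicontinuity of $i\mapsto A_i$ in the sense recalled in Section 2) enter, and where your extraction of maximizers $i_n$ is legitimate because an upper semicontinuous function attains its supremum on a compact set; and the support-function identity $d(z,C)=\sup\{\langle z^*,z\rangle : z^*\in C^{\ominus}\cap B_{X^*}\}$ for a closed convex cone $C$, followed by weak$^*$ separation. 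The only point worth an extra line is the (true) fact that the support function of $\overline{\rm co}^{w^*}\big(\bigcup_i(N(A_i,x)\cap\gamma B_{X^*})\big)$ in a direction $z\in X$ coincides with $\sup_i\sup\{\langle y^*,z\rangle: y^*\in N(A_i,x)\cap\gamma B_{X^*}\}=\gamma\sup_i d(z,T(A_i,x))$, which you use implicitly when contradicting the localized inequality.
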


By virtue of Lemma 5.1, we can use the normal property, extended
Jamenson property and CHIP to characterize the linear regularity
property. First we need to establish the following lemma.

\begin{lem}
Let $\gamma>0$ and $\{A_i: i\in I\}$ be a collection of closed
convex sets in $X$ such that $A:=\bigcap_{i\in I}A_i\not=\emptyset$.
Then (ii) in Lemma 5.1 holds if and only if $\{A_i: i\in I\}$ has
the CHIP and $\{N(A_i, x): i\in I\}$ has the property $(G_{\eta})$
for all $x\in A$ with the same constant $\eta:=1/\gamma$.
\end{lem}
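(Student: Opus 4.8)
The statement to be proven (Lemma 5.2) asserts that condition (ii) of Lemma 5.1—namely
\[
N(A, x)\cap B_{X^*}\subset\overline{\rm co}^{w^*}\Big(\bigcup_{i\in I}\big(N(A_i, x)\cap \gamma B_{X^*}\big)\Big)\quad\forall x\in A
\]
—is equivalent to the conjunction of the CHIP for $\{A_i: i\in I\}$ and property $(G_{\eta})$ (with $\eta:=1/\gamma$) for $\{N(A_i,x): i\in I\}$ at every $x\in A$. The plan is to fix $x\in A$ and reformulate both sides in a common ``dual inverse sum'' language so that Theorem 4.2 (more precisely the equivalence $\eqref{4.9}\Leftrightarrow\eqref{4.10}$ plus its cone refinement in Corollary 4.1) and Theorem 4.3 can be invoked. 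The first step is to record the three standard polar identities at the point $x$: writing $K_i:={\rm cone}(A_i-x)$ one has $K_i^{\circ}=N(A_i,x)$, and when $\{A_i: i\in I\}$ has the CHIP at $x$ (equivalently \eqref{5.3} holds, i.e.\ $\{K_i: i\in I\}$ has the closed intersection property), Lemma 2.1(e) gives $N(A,x)=\big(\bigcap_{i\in I}K_i\big)^{\circ}=\overline{\rm co}^{w^*}\big(\bigcup_{i\in I}K_i^{\circ}\big)=\overline{\sum_{i\in I}N(A_i,x)}^{w^*}$.

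The forward direction is the delicate one, because condition (ii) must be shown to \emph{imply} the CHIP. Here I would argue as follows. Condition (ii) says in particular that $N(A,x)\cap B_{X^*}$ is contained in $\overline{\rm co}^{w^*}\big(\bigcup_{i\in I}N(A_i,x)\big)=\overline{\sum_{i\in I}N(A_i,x)}^{w^*}$; since $N(A,x)$ is a cone this gives $N(A,x)\subset\overline{\sum_{i\in I}N(A_i,x)}^{w^*}$, and the reverse inclusion is automatic from $A\subset A_i$. Taking polars of $N(A,x)=\overline{\sum_{i\in I}N(A_i,x)}^{w^*}=\big(\bigcap_{i\in I}K_i\big)^{\circ}$ via the bipolar theorem (Lemma 2.1(d)) and Lemma 2.1(a) yields $\overline{\bigcap_{i\in I}K_i}=\bigcap_{i\in I}\overline{K_i}$, which is precisely \eqref{5.3}, i.e.\ the CHIP at $x$ by Remark 5.1(b). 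Once the CHIP is in hand, the translation is mechanical: using $\{x^*\}^{\circ}\to B_{X^*}$ and the identity $B_{X^*}\# Q=B_{X^*}\cap Q$ valid when $Q$ is a cone (Proposition 4.1(b), applied to $Q=\overline{\rm co}^{w^*}\big(\bigcup_i K_i^{\circ}\big)=N(A,x)$), condition (ii) is exactly \eqref{4.9} written out, with $\hat\eta$ replaced by $\gamma$ on the right; the equivalence \eqref{4.9}$\Leftrightarrow$ property $(G_{1/\gamma})$ for $\{K_i^{\circ}: i\in I\}=\{N(A_i,x): i\in I\}$ is then supplied by the argument inside the proof of Theorem 4.3 (the passage between \eqref{4.20}/\eqref{4.21} and the $(G_{\eta})$ formulation), which only used the closed intersection property.

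For the converse direction, suppose $\{A_i: i\in I\}$ has the CHIP and $\{N(A_i,x): i\in I\}$ has property $(G_{1/\gamma})$ at every $x\in A$. Fix $x\in A$; the CHIP gives the closed intersection property for $\{K_i: i\in I\}$, hence the polar identity $\overline{\rm co}^{w^*}\big(\bigcup_i K_i^{\circ}\big)=N(A,x)$. Running the $(G_{\eta})$-to-\eqref{4.21} argument from the sufficiency part of Theorem 4.3 verbatim (normalizing each nonzero term $x^*_{i,k}$ to lie in $N(A_i,x)\cap\gamma B_{X^*}$ and absorbing the weights) produces exactly
\[
N(A,x)\cap B_{X^*}\subset\overline{\rm co}^{w^*}\Big(\bigcup_{i\in I}\big(N(A_i,x)\cap\gamma B_{X^*}\big)\Big),
\]
which is (ii). The main obstacle, as indicated, is the implication ``(ii) $\Rightarrow$ CHIP'': one must resist the temptation to quote Theorem 4.3 directly (whose hypothesis \emph{is} the closed intersection property) and instead extract the weak$^*$-closedness identity $N(A,x)=\overline{\sum_{i\in I}N(A_i,x)}^{w^*}$ from the rescaling-invariant part of (ii) and dualize it back to \eqref{5.3}. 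Everything else is bookkeeping with polars, the inverse-sum identities of Proposition 4.1, and the $(G_{\eta})$ normalization already carried out in the proof of Theorem 4.3.
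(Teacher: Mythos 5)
Your overall architecture is sound, and your sufficiency direction together with the normalization/rescaling passage between the inclusion in Lemma 5.1(ii) and property $(G_{\eta})$ is exactly the computation the paper performs (the paper does it directly rather than quoting the proof of Theorem 4.3, but the content is identical, and the constant $\eta=1/\gamma$ is preserved exactly either way). However, there is a genuine error in your forward direction, precisely at the step you yourself flag as the delicate one: the derivation of the CHIP from (ii). You correctly extract the identity $N(A,x)=\overline{\sum_{i\in I}N(A_i,x)}^{w^*}$ from (ii) (using that $N(A,x)$ is a cone and that the weak$^*$-closed convex hull of $\bigcup_i N(A_i,x)$ sits inside $\overline{\sum_{i\in I}N(A_i,x)}^{w^*}$), but you then dualize and conclude that this yields $\overline{\bigcap_{i\in I}K_i}=\bigcap_{i\in I}\overline{K_i}$, ``which is precisely \eqref{5.3}, i.e.\ the CHIP at $x$ by Remark 5.1(b).'' This is wrong on two counts. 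First, Remark 5.1(b) says the exact opposite of what you need: for an infinite index set, \eqref{5.3} does \emph{not} imply the CHIP \eqref{5.1} (the paper gives the counterexample $A_i=(-\tfrac1i,\tfrac1i)$ in $\mathbb{R}$), so landing on \eqref{5.3} proves nothing about the CHIP as defined in Definition 5.1(i). Second, the intermediate identification $\overline{\sum_{i\in I}N(A_i,x)}^{w^*}=\bigl(\bigcap_{i\in I}K_i\bigr)^{\circ}$ is itself equivalent (by the bipolar theorem) to \eqref{5.3}, so using it as a step toward establishing a closure property of $\bigcap_i K_i$ is circular.

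The repair is short and in fact shorter than what you wrote: stop at the identity $N(A,x)=\overline{\sum_{i\in I}N(A_i,x)}^{w^*}$ and invoke Remark 5.1(a), which states that this identity \emph{is} the CHIP at $x$ (equivalently, dualize correctly: the polar of $N(A,x)$ is $T(A,x)=\overline{\rm cone}\bigl(\bigcap_i A_i-x\bigr)$, not $\overline{\bigcap_i K_i}$, and the polar of $\overline{\sum_i N(A_i,x)}^{w^*}$ is $\bigcap_i T(A_i,x)$ by Lemma 2.1(e), so the dual statement is \eqref{5.1}, not \eqref{5.3}). This is what the paper means when it says ``The CHIP is immediate from (ii) in Lemma 5.1.'' With that correction, the remainder of your plan — rewriting (ii) as the inverse-sum inclusion \eqref{4.9} via Proposition 4.1(b) and running the $(G_{\eta})\leftrightarrow$\eqref{4.21} normalization from the proof of Theorem 4.3 — goes through and reproduces the paper's argument.
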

\begin{proof}
{\bf The necessity part}. The CHIP is immediate from (ii) in Lemma
5.1. Let $x\in A$ and $x^*\in \overline{\sum\limits_{i\in I}N(A_i,
x)}^{w^*}\big\backslash \{0\}$. From the CHIP, we have
$\frac{x^*}{\|x^*\|}\in N(A, x)\cap B_{X^*}$. Then, there exist a
generalized sequence $\{x^*_k\}$ in ${\rm co}\big(\bigcup_{i\in
I}(N(A_i, x)\cap \gamma B_{X^*})\big)$ such that
$x^*_k\stackrel{w^*}\longrightarrow \frac{x^*}{\gamma\|x^*\|}$.
Thus, for each $k$, there are finite subset $I_k$ of $I$,
nonnegative scalars $\lambda_{i, k}$ and $\tilde{x}_{i, k}^*\in
N(A_i, x)\cap B_{X^*} (\forall i\in I_i)$ satisfying
\begin{equation*}
\sum_{i\in I_k}\lambda_{i, k}=1\ \ {\rm and}\ \ x^*_k=\sum_{i\in
I_k}\lambda_{i, k}\tilde{x}_{i, k}^*.
\end{equation*}
Let $ x^*_{i, k}:=\gamma\|x^*\|\lambda_{i, k}\tilde{x}^*_{i, k}\in
N(A_i, x) (\forall i\in I_k)$. By passing to the limit, we have
$$
\sum_{i\in I_k}x^*_{i, k}\stackrel{w^*}\longrightarrow x^*\ \ {\rm
and}\ \ \sum_{i\in I_k}\|x^*_{i, k}\|\leq \gamma\|x^*\|.
$$
This implies that $\{N(A_i, x): i\in I\}$ has property $(G_{\eta})$
with $\eta:=1/\gamma$.

{\bf The sufficiency part}. Let $x\in A$ and $x^*\in N(A, x)\cap
B_{X^*}$. By the CHIP property, one has $x^*\in
\overline{\sum\limits_{i\in I}N(A_i, x)}^{w^*}$. Noting that
$\{N(A_i, x): i\in I\}$ has the property $(G_{\eta})$, it follows
that there exist a generalized sequence $\{I_k\}$ of finite subsets
of $I$ and $x_{i, k}^*\in N(A_i, x)\backslash\{0\} (\forall i\in
I_k)$ such that
\begin{equation}\label{5.15}
\sum_{i\in I_k}x_{i, k}^*\stackrel{w^*}\longrightarrow x^*\ \ {\rm
and}\ \ \sum_{i\in I_k}\|x_{i, k}^*\|\leq\frac{1}{\eta}\|x^*\|\leq
\gamma.
\end{equation}
We denote
$$
\lambda_{i, k}:=\frac{\|x^*_{i, k}\|}{\sum_{j\in I_k}\|x^*_{j,
k}\|}\ \ {\rm and}\ \ \tilde{x}^*_{i, k}:=\frac{\sum_{j\in
I_k}\|x^*_{j, k}\|}{\|x^*_{i, k}\|}x^*_{i, k}\ \ \forall i\in I_k.
$$
From \eqref{5.15}, we have $\tilde{x}^*_{i, k}\in N(A_i, x)\cap
\gamma B_{X^*}$, $\sum\limits_{i\in I_k}\lambda_{i,
k}\tilde{x}^*_{i, k}\stackrel{w^*}\longrightarrow x^*$ and
$$
\sum\limits_{i\in I_k}\lambda_{i, k}\tilde{x}^*_{i, k}\in{\rm
co}\Big(\bigcup_{i\in I}(N(A_i, x)\cap \gamma B_{X^*})\Big).
$$
Hence $x^*\in \overline{\rm co}^{w^*}\big(\bigcup_{i\in I}(N(A_i,
x)\cap \gamma B_{X^*})\big)$. The proof is completed.
\end{proof}

By applying Lemmas 5.1 and 5.2, the following theorem, as one main result in
this paper, is immediate from Theorems 5.4 and 4.3.
\begin{them}
Let $\{A_i: i\in I\}$ be a collection of closed convex sets in $X$
with the nonempty intersection. Suppose that $I$ is a compact metric
space and that $i\mapsto A_i$ is lower semicontinuous. Then the following
statements are
equivalent:\\
$\rm (i)$ $\{A_i: i\in I\}$ has the linear regularity property;\\
$\rm (ii)$ $\{A_i: i\in I\}$ has the CHIP and there exists $\eta>0$
such that $\{T(A_i, x): i\in I\}$ has the normal property for all
$x\in \bigcap_{i\in I}A_i$ with the same constant $\eta$;\\
$\rm (iii)$ $\{A_i: i\in I\}$ has the CHIP and there exists $\eta>0$
such that  $\{N(A_i, x): i\in I\}$ has property $(G_{\eta})$ for all
$x\in \bigcap_{i\in I}A_i$\\
$\rm (iv)$ $\{A_i: i\in I\}$ has the CHIP and there exists $\gamma>0$
such that $\{T(A_i, x): i\in I\}$ has the linear regularity property for all
$x\in \bigcap_{i\in I}A_i$ with the same constant $\gamma$.
\end{them}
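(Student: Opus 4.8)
The plan is to obtain all four equivalences by chaining the quantitative results of Sections 3--5, reading everything through the normality, Jamenson and regularity \emph{constants}, so that the ``same constant for all $x$'' clauses in (ii)--(iv) are dealt with at once. I would begin with (i) $\Leftrightarrow$ (iii). Since $I$ is a compact metric space and $i\mapsto A_i$ is lower semicontinuous, Lemma 5.1 tells us that (i) holds with a constant $\gamma>0$ if and only if $N(A,x)\cap B_{X^*}\subset\overline{\rm co}^{w^*}\big(\bigcup_{i\in I}(N(A_i,x)\cap\gamma B_{X^*})\big)$ for every $x\in A$, and Lemma 5.2 tells us this is in turn equivalent to $\{A_i:i\in I\}$ having the CHIP together with $\{N(A_i,x):i\in I\}$ having property $(G_{1/\gamma})$ for all $x\in A$. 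Concatenating the two identifies (i) with (iii), with $\eta=1/\gamma$. In particular this shows that, under the standing hypotheses, linear regularity already forces the CHIP, which is why the CHIP appears as a conjunct in (ii), (iii) and (iv).

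The remaining equivalences (ii) $\Leftrightarrow$ (iii) $\Leftrightarrow$ (iv) I would prove pointwise in $x\in A$ and then upgrade. Fix $x\in A$. Each $T(A_i,x)$ is a closed convex cone containing $0$ and $\bigcap_{i\in I}T(A_i,x)$ is closed, so $\{T(A_i,x):i\in I\}$ trivially has the closed intersection property, while $N(A_i,x)=(T(A_i,x))^{\circ}$. Hence Theorem 4.3 applies with $K_i:=T(A_i,x)$ and, together with Corollary 4.2, gives the exact identity $\lambda_N(T(A_i,x):i\in I)=\lambda_G(N(A_i,x):i\in I)$; also, by Theorem 5.2 and Proposition 3.1(i), the linear regularity property of $\{T(A_i,x):i\in I\}$ is equivalent to its uniform normal property and, these sets being cones, to its normal property, and Remark 5.2 (with Proposition 3.1(i)) gives $\gamma(T(A_i,x):i\in I)=1/\lambda_N(T(A_i,x):i\in I)$. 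Since the normal property, property $(G_\eta)$ and the regularity inequality are monotone in their constants, the statements ``(ii) holds'', ``$\{N(A_i,x):i\in I\}$ has $(G_\eta)$ for all $x\in A$ with a common $\eta$'' and ``(iv) holds'' are each equivalent to $\inf_{x\in A}\lambda_N(T(A_i,x):i\in I)>0$. Adjoining the shared CHIP clause yields (ii) $\Leftrightarrow$ (iii) $\Leftrightarrow$ (iv), and with the first paragraph the full chain (i) $\Leftrightarrow$ (ii) $\Leftrightarrow$ (iii) $\Leftrightarrow$ (iv).

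The one genuinely delicate point --- and the step I would watch most carefully --- is precisely this passage from pointwise-in-$x$ equivalences to the \emph{uniform} statements in (ii)--(iv). This is why I would route the second step through the constants $\lambda_N$, $\lambda_G$ and $1/\gamma$ and invoke the \emph{equalities} in Corollary 4.2, Proposition 3.1(i) and Remark 5.2, rather than the purely qualitative form of Theorem 5.4: those equalities make the reduction ``one infimum over $x\in A$ is positive $\iff$ the other is'' immediate, which is exactly what turns the pointwise equivalences into the stated uniform ones.
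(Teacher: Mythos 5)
Your proof is correct and follows essentially the same route as the paper: (i) $\Leftrightarrow$ (iii) is obtained by concatenating Lemmas 5.1 and 5.2, and the chain (ii) $\Leftrightarrow$ (iii) $\Leftrightarrow$ (iv) comes from the normality/Jamenson/regularity dualities of Theorem 4.3, Theorem 5.2 and Proposition 3.1(i). Your explicit passage through the constants $\lambda_N$, $\lambda_G$ and $\gamma$ (via Corollary 4.2 and Remark 5.2) to secure the uniform-in-$x$ constants is a welcome tightening of the paper's terse appeal to the pointwise Theorem 5.4, but it is the same argument in substance.
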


\noindent{\bf Remark 5.3.} Theorem 5.5 actually extends and improves [3, Theorem 6.2]. When $I$ is a finite index
set, Theorem 5.5 reduces to [28, Theorem 6.1].


\end{document}